\tikzset{
	symbol/.style={
		draw=none,
		every to/.append style={
			edge node={node [sloped, allow upside down, auto=false]{$#1$}}}
	}
}
\theoremstyle{plain}
\newtheorem{theorem}[subsubsection]{Theorem}
\newtheorem{lemma}[subsubsection]{Lemma}
\newtheorem{prop}[subsubsection]{Proposition}
\newtheorem{cor}[subsubsection]{Corollary}
\theoremstyle{definition}
\newtheorem{defn}[subsubsection]{Definition}
\newtheorem{remark}[subsubsection]{Remark}
\def\AA{\mathbb{A}}
\def\CC{\mathbb{C}}
\def\EE{\mathbb{E}}
\def\GG{\mathbb{G}}
\def\PP{\mathbb{P}}
\def\RR{\mathbb{R}}
\def\TT{\mathbb{T}}
\def\ZZ{\mathbb{Z}}
\newcommand\cD{\mathcal{D}}
\newcommand\cE{\mathcal{E}}
\newcommand\cH{\mathcal{H}}
\newcommand\cM{\mathcal{M}}
\newcommand\cO{\mathcal{O}}
\newcommand\cR{\mathcal{R}}
\newcommand\cS{\mathcal{S}}
\newcommand\cV{\mathcal{V}}
\def\bQ{\mathbf{Q}}
\newcommand\frF{\mathfrak{F}}
\newcommand\frg{\mathfrak{g}}
\newcommand{\diff}{\textup{diff}}
\newcommand\Perv{\textup{Perv}}
\renewcommand{\Re}{\textup{Re}}
\newcommand\GL{\textup{GL}}
\newcommand\SL{\textup{SL}}
\newcommand\SO{\textup{SO}}
\newcommand\Sp{\textup{Sp}}
\newcommand{\Gm}{\GG_m}
\newcommand{\GM}[1]{\GG_{m,#1}}
\newcommand{\iso}{\simeq}
\newcommand{\conv}{\overset{+}{\otimes}}
\newcommand\quash[1]{}
\renewcommand\a\alpha
\renewcommand\b\beta
\newcommand\g\gamma
\newcommand\G\Gamma
\renewcommand\d\delta
\newcommand{\x}{\chi}
\newcommand{\fa}{\mathfrak{a}}
\newcommand{\fb}{\mathfrak{b}}
\title{Stokes matrices for Airy equations}
\author{Andreas Hohl}
\author{Konstantin Jakob}
\thanks{K.J. is supported by the DFG Research Fellowship JA 2967/1-1.}
\address{(A.H.) Université Paris Cité and Sorbonne Université, CNRS, IMJ-PRG, 75013 Paris, France}
\email{andreas.hohl@imj-prg.fr}
\address{(K.J.) Fachbereich Mathematik, TU Darmstadt, Schlossgartenstraße 7, 64289 Darmstadt, Germany}
\email{konstantin.jakob@outlook.de}
\subjclass[2020]{Primary 34M40; Secondary 33C20, 44A10}
\keywords{Airy equations, hypergeometric equations, Stokes phenomenon, Fourier--Laplace transform}
\begin{document}

\maketitle

\begin{abstract}
    We compute Stokes matrices for generalised Airy equations and prove that they are regular unipotent (up to multiplication with the formal monodromy). This class of differential equations was defined by Katz and includes the classical Airy equation. In addition, it includes differential equations which are not rigid. Our approach is based on the topological computation of Stokes matrices of the enhanced Fourier--Sato transform of a perverse sheaf due to D'Agnolo, Hien, Morando and Sabbah.
\end{abstract}

\section{Introduction}

The Stokes phenomenon was first described by G.\ G.\ Stokes in \cite{Sto} in the example of the Airy differential equation in the complex plane
$$y''(x) = xy(x).$$
This equation has an irregular singular point at $x=\infty$, and Stokes showed how a formal (non-convergent) solution at the singularity determines an analytic (convergent) solution on any sufficiently small sector centred at $\infty$. However, such an analytic solution depends on the direction, and discontinuities in the following sense may appear: If we fix a formal solution, compute an analytic lift in a certain direction and continue it analytically along a path to a different direction, this analytic continuation might not be an analytic lift of the same formal solution. This observation is nowadays referred to as the \emph{Stokes phenomenon}, and it has turned out to be a key observation in the classification of irregular singularities of differential equations. Different descriptions of this phenomenon have been developed, among which the concept of Stokes matrices is a very natural approach: Fixing a basis of formal solutions, one covers a neighbourhood of the singularity by sectors, each of which admits a basis of solutions consisting of analytic lifts, and one describes the relations between these bases on the overlaps of the sectors by matrices. However, it is often difficult to determine these matrices explicitly.

A generalised Airy equation of type $(n,m)$ is an equation corresponding to a differential operator of the form
\[P_n(\partial) + Q_m(x) \] 
on $\AA^1$, where $P$ and $Q$ are complex polynomials of degrees $n$ and $m$ respectively. The differential equation is of rank $n$ and it is regular outside $\infty$. It has a single slope $(n+m)/n$ at $\infty$. In this article, we will focus on the operator $\partial^n+x^m$ with $n$ and $m$, coprime.\par
For $m=1$ the equation defined by this operator is always rigid as it is obtained by Fourier--Laplace transform from an exponential equation of rank $1$. Rigid differential equations are of interest in several areas of algebraic geometry and number theory, for example in explicit approaches to the geometric Langlands program, see \cite{Yun17} for a survey and further references. In particular it is of interest to compute differential Galois groups of the equations in question. By a theorem of Ramis \cite[Theorem 8.10]{vdP03} the differential Galois group of an irregular equation is generated by its formal monodromy, its exponential torus and the Stokes matrices. The Stokes matrices can therefore be used to determine the differential Galois groups. \par
In contrast, when $n,m>1$, the corresponding differential equation is not rigid. A lot of explicit computations of Stokes matrices for equations which do not directly arise from a regular singular equation by a single Fourier--Laplace transform in the literature concern rigid equations, e.g.\ the classical Airy equation or confluent hypergeometric equations which are studied for example in \cite{DuM89} and \cite{H20}. \par
In this paper, we give a recipe to determine the Stokes matrices explicitly for any pair of coprime integers $(n,m)$ and prove that the unipotent Stokes matrices are regular in the sense that they have minimal centraliser dimension. In particular, in the rigid case ($m = 1$) there is a closed formula for the Stokes matrices. In the end we recover results on the differential Galois groups of generalised Airy equations due to N.\ Katz \cite[§4.2]{Ka87} and M.\ Kamgarpour and D.\ Sage \cite[§4.5]{KS20}. The latter generalised the Airy equation to a $G$-connection for an arbitrary simple complex group $G$. \par
Note that Fourier--Laplace transform exchanges Airy equations of type $(n,m)$ with equations of type $(m,n)$. We compute Stokes matrices regardless of whether $m<n$ or $n<m$. Our results may therefore serve to compare Stokes matrices before and after Fourier--Laplace transform. This may shed light on the behaviour of Stokes data under Fourier--Laplace transform in the case that none of the involved equations is regular singular. Questions of this type have been studied in several works (see \cite{Moc20} and the references therein).

Our approach is based on results about the Stokes data attached to irregular singularities that arise as Fourier--Laplace transforms of regular systems. They were first presented by B.\ Malgrange in \cite[Chap.\ XII]{Mal91} and the Stokes matrices have been made more explicit by P.\ Boalch in \cite[§2]{Bo15}. In \cite{DHMS20}, A.\ D'Agnolo, M.\ Hien, G.\ Morando and C.\ Sabbah recover these formulae making use of topological results: A regular holonomic D-module $\cM$ on the affine line corresponds to a perverse sheaf via the classical Riemann--Hilbert correspondence, and this perverse sheaf can be described by means of a quiver. On the other hand, the Fourier--Laplace transform $\frF(\cM)$ of $\cM$ has an irregular singularity at $\infty$, hence one can associate Stokes matrices to this system. Via the Riemann--Hilbert correspondence of D'Agnolo--Kashiwara (see \cite{DK16}), $\frF(\cM)$ corresponds to an enhanced ind-sheaf, which must therefore encode the Stokes matrices. The authors used these topological methods in order to give an explicit description of the Stokes matrices at $\infty$ of $\frF(\cM)$ in terms of the quiver associated to the perverse sheaf describing $\cM$. This approach yields a powerful method for explicit computations, since by these topological considerations -- despite their abstractness at first sight -- one can avoid analytic considerations which are often much more complicated.

In the last section of their article, they give examples of how this result can also be used to compute Stokes matrices for certain equations which are not Fourier--Laplace transforms of regular systems -- for example the classical Airy equation -- by manipulating them in such a way that the calculation reduces to such a Fourier--Laplace transform.

In his work \cite{H20}, M.\ Hien used the results of \cite{DHMS20} to determine the Stokes multipliers of hypergeometric systems with an irregular singularity at $\infty$. By a result of Katz, the latter can be expressed (up to ramification) as Fourier--Laplace transforms of hypergeometric systems with only regular singularities, and consequently the main difficulty is to describe the quiver associated to such a regular system.

\subsection*{Outline} It is the aim of this article to give explicit computations of Stokes matrices for operators of the form $\partial^n-x^m$ for $n,m$ coprime. 
 
After briefly reviewing the main ingredients concerning perverse sheaves, quivers, Fourier--Laplace transforms and enhanced ind-sheaves from \cite{DHMS20} in Section~\ref{sect:pre}, we show how we can relate such a differential system to a regular singular hypergeometric equation (irreducible if $n$ and $m$ are coprime) in Section~\ref{sect:general}. The idea of the computation in §\ref{subsec:AiryHyp} comes from the computation of \cite{DHMS20} in the case of the classical Airy equation: It aims at expressing the de-ramified Airy module as a pullback of some Fourier transform of a system with regular singularities, and the latter turns out to be hypergeometric. We will describe its quiver with methods similar to \cite{H20}. The main theorem is then Theorem~\ref{thm:stokesreg}, computing the Stokes matrices at $\infty$ associated to an operator $\partial^n-x^m$ and proving that they are regular unipotent (up to multiplication with the formal monodromy). Even though we do not give an explicit closed formula in full generality, the proof of Theorem~\ref{thm:stokesreg} gives a recipe for computing the Stokes matrices for any given $n$ and $m$. In fact, as mentioned before, for $m=1$ we do arrive at a closed formula. Finally, using an argument of E.\ Frenkel and D.\ Gross from \cite{FG09}, we compute the differential Galois groups of the equations in question, using most prominently the fact that the unipotent Stokes matrices are regular.

\subsection*{Notation \& conventions} We will denote by $\AA^1$ the complex affine line and set $\Gm\vcentcolon=\AA^1\setminus \{0\}$. We will often emphasise the name of the affine coordinate at $0$ by an index, e.g.\ $\AA^1_z$, $\GM{z}$ are the above spaces with affine coordinate $z$. They will mostly be considered as algebraic varieties. However, when we talk about fundamental groups, we will consider the spaces equipped with their analytic topologies.

\subsection*{Acknowledgements}
We came in contact with generalised Airy equations through M.\ Kamgarpour and D.\ Sage's work on Coxeter connections and we wish to thank them for that. We thank Claude Sabbah and Marco Hien for discussions and comments that helped improve a preliminary version of this article. 

\section{Preliminaries}\label{sect:pre}

\subsection{Enhanced Riemann--Hilbert and Fourier--Laplace}\label{subsec:enhanced}
If $X$ is a smooth complex algebraic variety, we will denote by $\cD_X$ the sheaf of differential operators on $X$, by $\mathrm{D}^\mathrm{b}(\cD_X)$ the derived category of $\cD_X$-modules and by $\mathrm{D}^\mathrm{b}_{\mathrm{hol}}(\cD_X)$ the full subcategory consisting of complexes with holonomic cohomologies. Moreover, we write $\otimes^\mathrm{D}$ for the tensor product of $\cD_X$-modules and, if $f$ is a morphism of smooth complex algebraic varieties, we will denote the (derived) direct and inverse image operations for D-modules by $f_*$, $f_!$ and $f^*$, respectively, and the middle extension along an embedding $j$ by $j_{!*}$. We refer to \cite{HTT}, \cite{Kas} for details on the theory of D-modules.

We denote by $\Perv_\Sigma(\CC_X)$ the category of perverse sheaves on $X$ with singularities at the finite set $\Sigma$. This is a subcategory of $\mathrm{D}^\mathrm{b}(\CC_{X^\mathrm{an}})$, the derived category of sheaves of $\CC$-vector spaces on the complex manifold $X^\mathrm{an}$ associated to $X$. We will still write $f^*$ for the inverse image operation on $\Perv_\Sigma(\CC_X)$ and $\mathrm{D}^\mathrm{b}(\CC_{X^\mathrm{an}})$, without emphasizing the analytification involved. We will also denote the middle extension for perverse sheaves along an embedding $j$ by $j_{!*}$.

In \cite{DK16}, the authors proved a Riemann--Hilbert correspondence for (possibly irregular) holonomic D-modules on complex manifolds. Concretely, they established the fully faithful functor of \emph{enhanced solutions}
\begin{equation}\label{eq:SolEanalytic}
\cS ol^\mathrm{E}_X\colon \mathrm{D}^\mathrm{b}_\mathrm{hol}(\cD_X)^\mathrm{op}\hookrightarrow \mathrm{E}^\mathrm{b}(\mathrm{I}\CC_X)
\end{equation}
on a complex manifold $X$ (where $\cD_X$ is the sheaf of analytic differential operators on $X$).
Here, $\mathrm{E}^\mathrm{b}(\mathrm{I}\CC_X)$ denotes the triangulated category of \emph{enhanced ind-sheaves} on $X$, which has been constructed in \cite{DK16}, building on the theory of ind-sheaves \cite{KS01}.

If $X$ is a smooth complex algebraic variety, it is well-known that an (algebraic) $\cD_X$-module $\cM$ can be considered as an (analytic) $\cD_{\widehat{X}^\mathrm{an}}$-module for a smooth completion $j\colon X\hookrightarrow \widehat{X}$ by the fully faithful assignment $\cM\mapsto (j_*\cM)^\mathrm{an}$. Therefore, the above functor \eqref{eq:SolEanalytic} induces an enhanced solution functor for an algebraic variety $X$
$$\cS ol^\mathrm{E}_{X_\infty}\colon \mathrm{D}^\mathrm{b}_\mathrm{hol}(\cD_X)^\mathrm{op}\hookrightarrow \mathrm{E}^\mathrm{b}(\mathrm{I}\CC_{X_\infty}),$$
where $\mathrm{E}^\mathrm{b}(\mathrm{I}\CC_{X_\infty})$ is a suitable subcategory of $\mathrm{E}^\mathrm{b}(\mathrm{I}\CC_{\widehat{X}^\mathrm{an}})$ and, in particular, does not depend on the choice of the smooth completion. We refer to \cite{Ito} for an exposition on the algebraic version of the algebraic enhanced Riemann--Hilbert correspondence (see also \cite{DHMS20} for the case of complex affine spaces).

The category $\mathrm{E}^\mathrm{b}(\mathrm{I}\CC_{X_\infty})$ comes equipped with a convolution product $\conv$ and direct and inverse image operations $\mathrm{E}f_*$, $\mathrm{E}f^{-1}$, $\mathrm{E}f_{!!}$ and $\mathrm{E}f^!$ for a morphism $f\colon X\to Y$ of smooth complex algebraic varieties. (These are denoted by $\mathrm{E}f^\mathrm{an}_{\infty*}$, etc.\ in \cite{Ito}.) Without recalling too many details, let us quickly recall the following: The category $\mathrm{E}^\mathrm{b}(\mathrm{I}\CC_{X_\infty})$ is a quotient of the derived category of ind-sheaves on $\widehat{X}^\mathrm{an}\times\PP^1(\RR)$, i.e.\ the base space is ``enhanced'' by some additional real variable. If $Y\subset X$ is a subvariety, there is a natural functor
$$\mathrm{E}^\mathrm{b}(\mathrm{I}\CC_{X_\infty})\to \mathrm{E}^\mathrm{b}(\mathrm{I}\CC_{X_\infty}), K\mapsto \pi^{-1}\CC_Y\otimes K,$$
where $\pi\colon X\times\RR\to X$ is the projection. This functor should be thought of as ``restriction to $Y$'' (or rather ``cutting off'' everything outside $Y$). An important class of objects are \emph{exponential enhanced ind-sheaves} $\EE^\varphi\in \mathrm{E}^\mathrm{b}(\mathrm{I}\CC_{X_\infty})$
for functions $\varphi\colon X\to \CC$. They are the topological counterpart via the functor $\cS ol^\mathrm{E}_{X_\infty}$ of exponential $\cD_X$-modules $\cE^\varphi$.

Finally, there is a fully faithful embedding
$$e\colon \mathrm{D}^\mathrm{b}(\CC_X)\hookrightarrow \mathrm{E}^\mathrm{b}(\mathrm{I}\CC_{X_\infty}),$$
enabling us to naturally consider complexes of sheaves (and in particular perverse sheaves) on $X$ as enhanced ind-sheaves. There is an isomorphism $\cS ol^\mathrm{E}_{X_\infty}(\cM)\iso e\cS ol_X(\cM)$ if $\cM$ is a regular holonomic $\cD_X$-module. We refer to \cite{DK16,DK19} for more details.

Classically, the Fourier--Laplace transform is the integral transform induced by the isomorphism of Weyl algebras $\mathsf{F}\colon \CC[z]\langle \partial_z\rangle\to \CC[w]\langle \partial_w\rangle, z\mapsto \partial_w, \partial_z\mapsto -w$. That is, if $M$ is a $\CC[w]\langle \partial_w\rangle$-module, its Fourier--Laplace transform $\frF(M)$ is the same underlying $\CC$-vector space with the action of $\CC[z]\langle\partial_z\rangle$ given by $z\cdot m\vcentcolon= \partial_w m$ and $\partial_z m\vcentcolon= -w\cdot m$. It was shown in \cite{KL} that this transform is given on (algebraic) $\cD_{\AA^1}$-modules by
$$\frF(\cM)={p_z}_*\big( \cE^{-wz} \otimes^\mathsf{D} p_w^* \cM \big),$$
where $p_w\colon \AA^1_w\times\AA^1_z\to\AA^1_w$ and $p_z\colon \AA^1_w\times\AA^1_z\to\AA^1_z$ are the projections and $\cE^{-wz}=\cD_{\AA^1_w\times\AA^1_z}/(\partial_w+z,\partial_z+w)$.
The inverse transform is described similarly, replacing $\cE^{-wz}$ by $\cE^{zw}$ and interchanging the two projections.

On the topological side, one can define the \emph{(inverse) enhanced Fourier--Sato transform}
$$K^\curlywedge \vcentcolon= \mathrm{E}{p_w}_{!!} \big( \EE^{zw} \conv \mathrm{E}p_z^{-1} K \big)\in \mathrm{E}^\mathrm{b}(\mathrm{I}\CC_{(\AA^1_w)_\infty})$$
for an object $K\in \mathrm{E}^\mathrm{b}(\mathrm{I}\CC_{(\AA^1_z)_\infty})$.\footnote{Note that the notation $K^\curlywedge$ is in accordance with \cite{DHMS20}, but the authors call this operation the (non-inverse) enhanced Fourier--Laplace transform in loc.\ cit. (since they work with the kernel $\cE^{zw}$ in the Fourier--Laplace transform). We will also often omit the word ``inverse''.}

We have the following result due to the nice compatibilities of the enhanced solution functor (see \cite[Theorem 9.4.10]{DK16}) and since exponential D-modules correspond to exponential enhanced ind-sheaves via the enhanced solution functor. (This was first observed in \cite{KS16}.)

\begin{lemma}\label{lemma:RH-Fourier}
Let $\cM\in\mathrm{D}^\mathrm{b}(\cD_{\AA^1_w})$. Then there is an isomorphism in $\mathrm{E}^\mathrm{b}(\mathrm{I}\CC_{(\AA^1_z)_\infty})$
$$\mathcal{S}ol^\mathrm{E}_{(\AA^1_z)_\infty}\big(\frF^{-1}(\cM)\big) \iso \mathcal{S}ol^\mathrm{E}_{(\AA^1_w)_\infty}(\cM)^\curlywedge[1].$$
\end{lemma}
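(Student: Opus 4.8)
The proof amounts to organising the compatibilities of $\cS ol^\mathrm{E}$ with the six operations (from \cite[Theorem 9.4.10]{DK16}) and with exponential $\cD$-modules (from \cite{KS16}); the plan is to unravel $\frF^{-1}$ into its defining composition of $\cD$-module operations and transport each step across the enhanced solution functor.

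First I would rewrite $\frF^{-1}$ via its defining kernel, $\frF^{-1}(\cM)\simeq {p_z}_*\big(\cE^{zw}\otimes^\mathsf{D} p_w^*\cM\big)$ (coordinates labelled as in the statement), which presents $\frF^{-1}$ as the composite of (i) inverse image along $p_w\colon\AA^1_w\times\AA^1_z\to\AA^1_w$, (ii) tensoring with the exponential module $\cE^{zw}$, and (iii) direct image along $p_z\colon\AA^1_w\times\AA^1_z\to\AA^1_z$. Applying $\cS ol^\mathrm{E}_{(\AA^1_z)_\infty}$ and commuting it past each step, \cite[Theorem 9.4.10]{DK16} yields $\cS ol^\mathrm{E}(p_w^*\cM)\simeq \mathrm{E}p_w^{-1}\cS ol^\mathrm{E}(\cM)$ (up to the shift from the one-dimensional fibre of $p_w$), $\cS ol^\mathrm{E}(-\otimes^\mathsf{D}-)\simeq\cS ol^\mathrm{E}(-)\conv\cS ol^\mathrm{E}(-)$ (equivalently, pass through the external product $\boxtimes$ and restrict to the diagonal), and $\cS ol^\mathrm{E}({p_z}_*-)\simeq \mathrm{E}{p_z}_{!!}\cS ol^\mathrm{E}(-)$ (up to the shift from the one-dimensional fibre of $p_z$, and with the proper-support functor $\mathrm{E}{p_z}_{!!}$ rather than $\mathrm{E}{p_z}_{*}$); finally $\cS ol^\mathrm{E}(\cE^{zw})\simeq\EE^{zw}$, so that the exponential $\cD$-module in the kernel of $\frF^{-1}$ matches the exponential enhanced ind-sheaf in the kernel of $(-)^\curlywedge$, which is \cite{KS16} (see also \cite{DK16,DK19}). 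Concatenating these isomorphisms reproduces exactly the kernel formula $K\mapsto\mathrm{E}{p_z}_{!!}\big(\EE^{zw}\conv\mathrm{E}p_w^{-1}K\big)$ that defines $(-)^\curlywedge$, evaluated at $K=\cS ol^\mathrm{E}_{(\AA^1_w)_\infty}(\cM)$, up to one residual cohomological shift. Summing the shifts from the two one-dimensional fibres against the conventions relating $\cS ol$, $\mathrm{DR}$ and the six functors, this residual shift comes out to $[1]$ — which is where the $[1]$ in the statement originates.

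The step I expect to be the genuine obstacle (the one not formal from adjunctions) is $\cS ol^\mathrm{E}({p_z}_*-)\simeq\mathrm{E}{p_z}_{!!}\cS ol^\mathrm{E}(-)$ applied to the object $\cE^{zw}\otimes^\mathsf{D} p_w^*\cM$: the projection $p_z$ is not proper and this $\cD$-module is irregular along $\{w=\infty\}$, so interchanging $\cS ol^\mathrm{E}$ with this non-proper pushforward carries real content. Concretely one would pass to a completion $\PP^1_w\times\AA^1_z$, push forward properly, and check that the contribution of $\{w=\infty\}$ to the enhanced solution complex is killed by the exponential factor $\cE^{zw}$; this vanishing is precisely the mechanism behind \cite{KS16} and \cite[Theorem 9.4.10]{DK16}, which I would cite rather than reprove. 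Everything remaining — fixing the signs in the exponents and adding up the cohomological shifts — is routine.
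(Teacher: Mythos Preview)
Your proposal is correct and follows exactly the approach the paper indicates: the paper does not give a detailed proof of this lemma at all, but simply states it as a consequence of ``the nice compatibilities of the enhanced solution functor (see \cite[Theorem 9.4.10]{DK16}) and since exponential D-modules correspond to exponential enhanced ind-sheaves via the enhanced solution functor'' (attributed to \cite{KS16}). Your write-up is a faithful unpacking of precisely these two ingredients --- the six-functor compatibilities for $\cS ol^\mathrm{E}$ and the identification $\cS ol^\mathrm{E}(\cE^{zw})\simeq\EE^{zw}$ --- and the paper's treatment is nothing more than a pointer to the same references.
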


\subsection{Enhanced Stokes phenomena}\label{sec:StokesPhenomena}
We briefly give an overview of the Stokes phenomenon, especially recalling how it is expressed in the context of enhanced solutions.

Let $\cM$ be an (algebraic) $\cD_{\AA^1}$-module with an irregular singularity at $\infty$ and let $z$ be a local coordinate of $\AA^1$ at $0$. It can be considered as an (analytic) $\cD_{\PP^1}$-module (where $\PP^1=\PP^1(\CC)$ is the complex projective line) satisfying $\cM(*\infty)\iso\cM$ (see \cite[p.\ 75]{Mal91}). Denote by $\widehat{\cO_{\PP^1,\infty}}$ the formalisation of the stalk $\cO_{\PP^1,\infty}$ and write $(\bullet)\widehat{|}_\infty\vcentcolon= \widehat{\cO_{\PP^1,\infty}}\otimes_{\cO_{\PP^1,\infty}}(\bullet)_\infty$. The formal classification of meromorphic connections in dimension $1$ states that there is a ramification map $\rho$ and an isomorphism
$$\big(\rho^*\cM\big)\widehat{\big|}_{\infty} \iso \big(\bigoplus_{i\in I} \cE^{\varphi_i}\otimes^\mathrm{D} \cR_i\big)\widehat{\big|}_{\infty}$$
for $I$ a finite index set, some $\varphi_i\in z\CC[z]$ and regular holonomic modules $\cR_i$.
This isomorphism is often referred to as the \emph{Levelt--Turrittin decomposition} of $\cM$ at $\infty$.

It is well-known that this decomposition generally does not lift to a decomposition of $\cM$ itself (without formalisation). However, it can be lifted on sufficiently small sectors on the real blow-up space of $\PP^1$ at $\infty$ (this is the Hukuhara--Turrittin theorem, see e.g.\ \cite[Théorème (1.4)]{Mal91}). This also yields a grading on small sectors of the Stokes-filtered local system associated to $\cM$ via the Riemann--Hilbert correspondence of Deligne--Malgrange. We refer e.g.\ to \cite{Bo20} for an overview of various descriptions of the Stokes phenomenon.

In terms of enhanced ind-sheaves, there is a similar statement: On any sufficiently small sector $S$ at $\infty$, there exists an isomorphism
\begin{equation} \label{eq:enhancedHT}
\pi^{-1}\CC_S\otimes \mathrm{E}\rho^{-1}\mathcal{S}ol^\mathrm{E}_{(\AA^1)_\infty}(\cM)\iso \pi^{-1}\CC_S\otimes \bigoplus_{i\in I} (\EE^{\varphi_i})^{r_i},
\end{equation}
where the $r_i$ are the ranks of the $\cR_i$. To be more precise, ``sufficiently small sector at $\infty$'' means that $S=\{z\in\AA^1\mid |z|> R, \arg z\in [\theta-\varepsilon,\theta+\varepsilon]\}$ for some $\theta\in\RR/2\pi\ZZ$ and some $R\geq 0$, $\varepsilon>0$.

In the case that all $\varphi_i$ as well as their pairwise differences have the same pole order $k$ (often called the case of \emph{pure level k}, see \cite{Mal83}, \cite{HS15}), one can describe the size of $S$ more concretely: There are $2k$ so-called \emph{Stokes directions} for every pair of exponents (they are the rays emanating from $\infty$ at which the asymptotic behaviour of $e^{\varphi_i-\varphi_j}$ changes). Any sector containing at most one Stokes direction for each pair of exponents, and hence in particular any sector of angle $\frac{\pi}{k}$ (and sufficiently small radius) whose boundary is not a Stokes direction, will admit a decomposition as in \eqref{eq:enhancedHT}. Hence, one can cover a neighbourhood of $\infty$ by $2k$ such sectors. If moreover the $\varphi_i$ are monomials of degree $k$ and $\cM$ has no other singularities except for a possible regular singularity at $0$, the radius of the sectors can be chosen to be infinite (i.e.\ $R=0$ above). 

This now yields Stokes matrices in the following way:
Let $\theta_0\in\RR/2\pi\ZZ$ be a \emph{generic direction} (i.e.\ not a Stokes direction for any pair of exponents $\varphi_i$, $\varphi_j$). Then consider the sectors
$$H_j=\left\{ z\in \AA^1\setminus \{0\} \,\Big| \arg z\in \left[\theta_0+(j-1)\frac{\pi}{k},\theta_0+j\frac{\pi}{k}\right] \right\}$$
for $j\in \{1,\ldots,2k\}$ and denote the decomposition isomorphisms by
$$\delta_j\colon \pi^{-1}\CC_{H_j}\otimes \mathrm{E}\rho^{-1}\mathcal{S}ol^\mathrm{E}_{(\AA^1)_\infty}(\cM)\overset{\sim}{\longrightarrow} \pi^{-1}\CC_{H_j}\otimes \bigoplus_{i\in I} (\EE^{\varphi_i})^{r_i}.$$
On the intersections of two adjacent sectors $H_{j,j+1}\vcentcolon= H_j\cap H_{j+1}$, one will then have two decompositions induced by $\delta_j$ and $\delta_{j+1}$, whose comparison yields transition matrices: The automorphisms $\sigma_j\vcentcolon=(\pi^{-1}\CC_{H_{j,j+1}}\otimes \delta_{j+1})\circ (\pi^{-1}\CC_{H_{j,j+1}}\otimes \delta_{j})^{-1}$ can be represented by invertible matrices $S_j$, the \emph{Stokes matrices} of $\cM$ at $\infty$ (see Fig.~\ref{fig:StokesExplanation} for an illustration). The generic direction $\theta_0$ defines a total ordering on the exponents $\varphi_i$, and with respect to this ordering the matrices representing $\sigma_j$ are upper (resp.\ lower) block-triangular if $j$ is odd (resp.\ even).
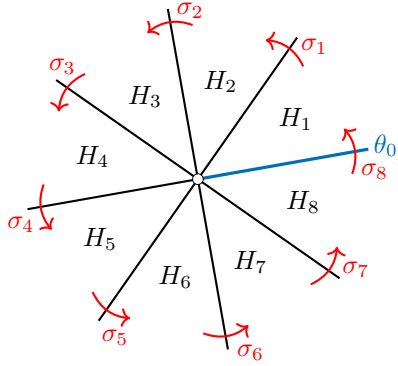
\begin{figure}[t]
	\centering
\begin{tikzpicture}
	\newcommand{\te}{10}
	\newcommand{\ra}{2.3}
	
	\draw[thick] ({-\ra*cos(\te)},{-\ra*sin(\te)})--(0,0);
	\draw[very thick, NavyBlue] (0,0)--({\ra*cos(\te)},{\ra*sin(\te)});
	\draw[thick] ({-\ra*cos(\te+45)},{-\ra*sin(\te+45)})--({\ra*cos(\te+45)},{\ra*sin(\te+45)});
	\draw[thick] ({-\ra*cos(\te+90)},{-\ra*sin(\te+90)})--({\ra*cos(\te+90)},{\ra*sin(\te+90)});
	\draw[thick] ({-\ra*cos(\te+135)},{-\ra*sin(\te+135)})--({\ra*cos(\te+135)},{\ra*sin(\te+135)});
	\node at (1.3,0.8) {$H_1$};
	\node at (-1.3,-0.8) {$H_5$};
	\node at (0.3,1.3) {$H_2$};
	\node at (-0.3,-1.3) {$H_6$};
	\node at (-0.7,1.1) {$H_3$};
	\node at (0.7,-1.1) {$H_7$};
	\node at (-1.4,0.3) {$H_4$};
	\node at (1.4,-0.3) {$H_8$};
	\node at (2.5,0.45) {\color{NavyBlue}$\theta_0$};
	
	\newcommand{\xn}{1.4}
	\newcommand{\yn}{1.5}
	\newcommand{\an}{0.9}
	\newcommand{\bn}{1.85}
	\draw [->,thick,red] (\xn,\yn) to [bend right] (\an,\bn);
	\node at (1.55,1.77) {\color{red}$\sigma_1$};
	\draw [->,thick,red] ({\xn*cos(45)-\yn*sin(45)},{\xn*sin(45)+\yn*cos(45)}) to [bend right] ({\an*cos(45)-\bn*sin(45)},{\an*sin(45)+\bn*cos(45)});
	\node at (-0.1,2.25) {\color{red}$\sigma_2$};
	\draw [->,thick,red] ({\xn*cos(90)-\yn*sin(90)},{\xn*sin(90)+\yn*cos(90)}) to [bend right] ({\an*cos(90)-\bn*sin(90)},{\an*sin(90)+\bn*cos(90)});
	\node at (-1.8,1.5) {\color{red}$\sigma_3$};
	\draw [->,thick,red] ({\xn*cos(135)-\yn*sin(135)},{\xn*sin(135)+\yn*cos(135)}) to [bend right] ({\an*cos(135)-\bn*sin(135)},{\an*sin(135)+\bn*cos(135)});
	\node at (-2.35,-0.6) {\color{red}$\sigma_4$};
	\draw [->,thick,red] ({\xn*cos(180)-\yn*sin(180)},{\xn*sin(180)+\yn*cos(180)}) to [bend right] ({\an*cos(180)-\bn*sin(180)},{\an*sin(180)+\bn*cos(180)});
	\node at (-1.1,-2.1) {\color{red}$\sigma_5$};
	\draw [->,thick,red] ({\xn*cos(225)-\yn*sin(225)},{\xn*sin(225)+\yn*cos(225)}) to [bend right] ({\an*cos(225)-\bn*sin(225)},{\an*sin(225)+\bn*cos(225)});
	\node at (0.7,-2.3) {\color{red}$\sigma_6$};
	\draw [->,thick,red] ({\xn*cos(270)-\yn*sin(270)},{\xn*sin(270)+\yn*cos(270)}) to [bend right] ({\an*cos(270)-\bn*sin(270)},{\an*sin(270)+\bn*cos(270)});
	\node at (2.1,-1.2) {\color{red}$\sigma_7$};
	\draw [->,thick,red] ({\xn*cos(315)-\yn*sin(315)},{\xn*sin(315)+\yn*cos(315)}) to [bend right] ({\an*cos(315)-\bn*sin(315)},{\an*sin(315)+\bn*cos(315)});
	\node at (2.35,0.1) {\color{red}$\sigma_8$};
	
	\draw (0,0) circle (2pt);
	\fill[White] (0,0) circle (1.8pt);
\end{tikzpicture}
\caption{Sectors determined by the choice of a generic direction $\theta_0$ and the transition isomorphisms in the case of exponents of the form  $\varphi_i=c_i z^4$. The point at the centre of the picture is the origin, and we consider the $H_j$ as sectors at $\infty$ with infinite radius.}
\label{fig:StokesExplanation}
\end{figure}

(The original idea for such a description of Stokes matrices using enhanced ind-sheaves is due to \cite[§9]{DK16}. We refer to \cite{DHMS20} for the case of pure level $1$ and to \cite[§§5--7]{Ho21} for monomials of degree $2$, noting that these arguments work accordingly for monomials of higher degree.)

\subsection{Perverse sheaves and quivers} \label{ss:pervquiv}

Let $\Sigma\subset \AA^1$ be finite. We recall how to describe perverse sheaves on $\AA^1$ with singularities at $\Sigma$ in terms of quivers. 
\begin{defn}
We denote by $\bQ_\Sigma$ the category whose objects are tuples
\[(\Psi, \Phi_s, u_s, v_s)_{s\in \Sigma}\]
where $\Psi$ and $\Phi_s$ are finite-dimensional $\CC$-vector spaces and $u_s:\Psi \rightarrow \Phi_s$, $v_s:\Phi_s \rightarrow \Psi$ are linear maps such that $T_s:=1-v_su_s$ is invertible for all $s\in \Sigma$.
\end{defn}

Any choice of $\fa, \fb \in \CC$ such that \begin{align}
 \Re(\fa \cdot \fb)&=0    \\
 \Re((s-s')\fb) &\neq 0 \quad \forall s,s'\in \Sigma, s\neq s'  \label{eq:betaCondition}
\end{align}
defines an equivalence of categories 
\[Q^{(\fa,\fb)}: \Perv_\Sigma(\AA^1) \rightarrow \bQ_\Sigma \]   
as described in \cite[§4]{DHMS20} and  \cite{GMV96}. For $F\in \Perv_\Sigma(\AA^1)$ with quiver $Q=(\Psi, \Phi_s, u_s, v_s)_{s\in \Sigma}$, the endomorphism $T_s$ of $\Psi$ is, up to conjugation, the local monodromy of $F$ at $s$.

\begin{defn} The {\it exponential dominance order} on $\Sigma$ is defined as follows. Fix a choice of $\fa,\fb$ as above. For $s, s'\in \Sigma$ we say that $s < s'$ if and only if $\Re(s\fb) < \Re(s'\fb)$. 
\end{defn}
Condition \eqref{eq:betaCondition} ensures that any two different elements of $\Sigma$ are comparable. We may then enumerate $\Sigma=\{s_1,\dots,s_n\}$ according to exponential dominance.

\subsection{Stokes multipliers for the enhanced Fourier--Sato transform} We quick\-ly recall how to obtain the Stokes multipliers of the enhanced Fourier--Sato transform $(eF)^\curlywedge$ for a perverse sheaf $F\in \Perv_\Sigma(\AA^1_z)$ following \cite[§5.2]{DHMS20}. With $\fa$ and $\fb$ as above, define
\begin{align*}H_{\pm \fa} &:= \{z \in \AA^1_w \mid \pm \Re(\fa w) \ge 0 \} \\
h_{\pm \fb} &:= \pm \RR_{>0} \fb \subset \AA^1_w. \end{align*}
Then $\AA^1_w = H_\fa \cup H_{-\fa}$ and $H_\fa \cap H_{-\fa}=h_\fb \cup h_{-\fb}$.

Let $K=(eF)^\curlywedge$ be the enhanced Fourier--Sato transform of $F$. The corresponding D-module is the Fourier--Laplace transform of the regular singular D-module associated to the perverse sheaf $F$. It is an irregular singular connection with pole of order $2$ at infinity -- its exponential factors are $e^{sw}$, $s\in\Sigma$. The $H_{\pm \fa}$ can be considered as sectors at $\infty$ containing exactly one Stokes direction for each pair of exponential factors so that there are decompositions
$$\delta_{\pm \fa}\colon \pi^{-1}\CC_{H_{\pm \fa}}\otimes K \overset{\sim}{\longrightarrow} \pi^{-1}\CC_{H_{\pm \fa}}\otimes \bigoplus_{s\in\Sigma} \EE^{sw}.$$
On the half-lines $h_{\pm\fb}$, one therefore has two such isomorphisms and the Stokes multipliers $S_{\pm\fb}$ are matrices representing the isomorphisms $(\pi^{-1}\CC_{h_{\pm\fb}}\otimes \delta_{-\fa})\circ(\pi^{-1}\CC_{h_{\pm\fb}}\otimes \delta_\fa)^{-1}$. (Note that this convention differs from the counter-clockwise arrangement of Stokes matrices in §\ref{sec:StokesPhenomena}.) For more details on the definition of Stokes multipliers in this setting we refer to \cite[§5.2]{DHMS20}. We recall the following result, which is a reformulation of some findings in \cite[Chap.\ XII]{Mal91} (cf.\ also \cite[§2]{Bo15}). We state it here in the form given in \cite{DHMS20} since this best fits our framework.

\begin{theorem}[{\cite[Theorem 5.2.2]{DHMS20}}]\label{theoremQuiverStokes}
Fix $\fa$ and $\fb$ as above. Let $F\in\Perv_\Sigma(\AA^1_z)$ be a perverse sheaf with associated quiver $(\Psi,\Phi_s,u_s,v_s)_{s\in\Sigma}$. 
Choose a numbering of $\Sigma$ such that $s_1<\ldots<s_n$ and write $u_k\vcentcolon=u_{s_k}$, $v_k\vcentcolon=v_{s_k}$ and set $\TT_k\vcentcolon= 1-u_kv_k$.

Denote by $K\vcentcolon= (eF)^\curlywedge\in \mathrm{E}^\mathrm{b}(\mathrm{I}\CC_{\AA^1_w})$ the associated enhanced Fourier--Sato transform. The Stokes multipliers $S_{\pm \fb}$ of $K$ at $\infty$ are given by the block-triangular matrices
\begin{align}
    S_\fb &= \begin{pmatrix} 1 & u_1v_2 & u_1v_3 &  \dots & u_1v_n \\
     & 1 & u_2v_3 & \dots & u_2v_n  \\
     & & \ddots &  & \vdots \\
     & & & & 1 
    \end{pmatrix} \\
    S_{-\fb} &= \begin{pmatrix} \TT_1 &  &  &  \\
     -u_2v_1 & \TT_2 &  &   \\
     \vdots &   & \ddots &  \\
     -u_nv_1 & -u_nv_2  & \dots &  \TT_n 
    \end{pmatrix}.
\end{align}
\end{theorem}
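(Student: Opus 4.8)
The plan is to follow the topological method of \cite{DK16}: produce an explicit model of $eF$ as an enhanced ind-sheaf adapted to $\fb$, push it through the inverse enhanced Fourier--Sato transform $K=\mathrm{E}{p_w}_{!!}(\EE^{zw}\conv\mathrm{E}p_z^{-1}(\bullet))$, and then read the decomposition isomorphisms $\delta_{\pm\fa}$ and their comparison along $h_{\pm\fb}$ directly off that model. (One could alternatively deduce the statement from Malgrange's analysis of Laplace transforms of regular systems \cite{Mal91} via the Deligne--Malgrange Riemann--Hilbert correspondence, translating into the quiver language; the route below is self-contained.) For the model, I would realise $F$ from its quiver as in \cite{DHMS20,GMV96}: by \eqref{eq:betaCondition} the half-lines $\ell_s$ emanating from the points $s\in\Sigma$ in a fixed direction determined by $\fb$ are pairwise disjoint and have simply connected complement $U$; on $U$ the perverse sheaf is a shift of the constant sheaf with stalk $\Psi$, and $F$ is reconstructed as a mapping cone with terms $\Psi\otimes e\CC_{\overline U}$ and $\bigoplus_{s\in\Sigma}\Phi_s\otimes e\CC_{\ell_s}$, the connecting morphism being assembled out of the $u_s$, $v_s$ and $T_s$. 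Applying $e$ gives an explicit presentation of $eF$, and by exactness of the transform it suffices to transform each term and the connecting morphism, and then restrict to $H_{\pm\fa}$.

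Next I would carry out the transform computations. The key inputs are: (i) $\EE^{zw}\conv\mathrm{E}p_z^{-1}(e\CC_{\{s\}})\iso\EE^{sw}$, i.e.\ a skyscraper at $s$ transforms to the exponential $\EE^{sw}$; (ii) the transform of $e\CC_{\ell_s}$, after $\mathrm{E}{p_w}_{!!}$, is supported on a half-plane of $\AA^1_w$ and, cut to $H_{\pm\fa}$, contributes a copy of $\EE^{sw}$; and (iii) the transform of $e\CC_{\overline U}$, restricted to $H_{\pm\fa}$, is negligible. The hypotheses $\Re(\fa\fb)=0$ and \eqref{eq:betaCondition} are exactly what make the half-lines $\ell_s$ and the half-planes $H_{\pm\fa}$ interact cleanly, so that on each of $H_\fa$ and $H_{-\fa}$ the transform $K$ becomes isomorphic to $\bigoplus_{s\in\Sigma}\Phi_s\otimes\EE^{sw}$, with $\EE^{sw}$ occurring with multiplicity $r_s=\dim\Phi_s$; these are the decompositions $\delta_\fa$ and $\delta_{-\fa}$. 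Moreover the connecting morphism of the model survives as a collection of maps $\EE^{s'w}\to\EE^{sw}$ over sub-sectors of $H_{\pm\fa}$, and such a map can be nonzero over a given ray only when $\Re((s-s')w)$ has the right sign there, i.e.\ only in the direction prescribed by the exponential dominance order.

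Finally I would compare $\delta_\fa$ and $\delta_{-\fa}$. Composing $\delta_{-\fa}\circ\delta_\fa^{-1}$ over $h_\fb$ (where $s_n$ dominates) and over $h_{-\fb}$ (where $s_1$ dominates), and tracking the two cones carefully, yields the stated block-triangular matrices: along $h_\fb$ the only surviving off-diagonal contributions are the compositions $u_iv_j\colon\Phi_j\to\Phi_i$, so $S_\fb$ is unipotent upper-triangular, while along $h_{-\fb}$ one gets the entries $-u_iv_j$ together with the diagonal blocks $\TT_k=1-u_kv_k$, the deviation from the identity on the diagonal coming from the vanishing-cycle term $\bigoplus_s\Phi_s\otimes e\CC_{\ell_s}$ of the model (which carries the local monodromy at $s_k$). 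A useful consistency check is that both sides behave functorially under direct sums and extensions of perverse sheaves --- becoming block-diagonal, resp.\ block-triangular --- which also reduces the verification of the precise entries to the cases $n=1$ and $n=2$.

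The main obstacle is the second step: making the enhanced ind-sheaf computations rigorous on the real blow-up of $\PP^1$ at $\infty$ --- in particular showing that $\mathrm{E}{p_w}_{!!}$ of the transform of $e\CC_{\overline U}$ is invisible on $H_{\pm\fa}$ while the transforms of the $e\CC_{\ell_s}$ assemble to $\bigoplus_s\Phi_s\otimes\EE^{sw}$ --- and then keeping precise track of signs and of which of $u_s$, $v_s$ enters which cone, so that the two comparison maps come out in exactly the claimed shape. This is essentially a Picard--Lefschetz computation performed at the level of enhanced ind-sheaves, and the genericity of $\fa,\fb$ is what keeps it under control.
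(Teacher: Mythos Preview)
The paper does not prove this theorem: it is quoted verbatim from \cite[Theorem 5.2.2]{DHMS20} (with the remark that it reformulates results of Malgrange \cite[Chap.~XII]{Mal91} and Boalch \cite[§2]{Bo15}), and no proof is given in the present article. So there is nothing in the paper to compare your argument against.

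That said, your outline is a fair sketch of the strategy actually carried out in \cite{DHMS20}: one realises $F$ from its quiver via half-lines $\ell_s$ in the direction determined by $\fb$, presents $eF$ as an iterated cone on building blocks of the form $e\CC_{\ell_s}$ and $e\CC_{\overline U}$, pushes each piece through the enhanced Fourier--Sato kernel, and reads off the sectorial decompositions $\delta_{\pm\fa}$ and their comparison on $h_{\pm\fb}$. The point you flag as the main obstacle --- the explicit enhanced ind-sheaf computations (transforms of $e\CC_{\ell_s}$ and $e\CC_{\overline U}$, vanishing on the appropriate half-planes, and the bookkeeping of which of $u_s$, $v_s$ appears where) --- is indeed where essentially all of the work in \cite{DHMS20} lies, and it occupies a substantial portion of that paper. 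Your reduction ``to the cases $n=1$ and $n=2$'' via functoriality under direct sums and extensions is not how \cite{DHMS20} proceeds (they treat general $\Sigma$ directly), and would require some care: an arbitrary perverse sheaf in $\Perv_\Sigma(\AA^1)$ is not built from two-point perverse sheaves by extensions in a way that is obviously compatible with the quiver description and with the enhanced Fourier--Sato transform, so this shortcut, while plausible as a consistency check, does not obviously replace the full computation.
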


\section{Stokes matrices for Airy equations}\label{sect:general}
\subsection{Generalised Airy equations}
We will write $\partial_y\vcentcolon= \frac{d}{dy}$ and denote by $\CC[y]\langle\partial_y\rangle$ the Weyl algebra.

In \cite{Ka87}, N.\ Katz introduced the class of \emph{differential operators of Airy type} on $\AA^1$, which are operators of the form
$$P_n(\partial_y) + Q_m(y)\in \CC[y]\langle \partial_y\rangle,$$
where $P_n,Q_m\in \CC[y]$ are polynomials of degree $n$ and $m$, respectively.

The classical Airy operator is recovered by choosing $n=2$, $m=1$ and $P_2(y)=y^2$, $Q_1(y)=y$.

The $\cD_{\AA^1}$-module associated to such an operator has an irregular singularity at $\infty$ of slope $\frac{n+m}{n}$. We will compute the Stokes matrices at $\infty$ when $P_n$ and $Q_m$ are monomials, i.e.\ for operators of the form
$$\partial_y^n - y^m.$$
If $m=1$, the Fourier--Laplace transform of this operator has rank $1$ and hence the equation defined by it is rigid in the sense that its formal type determines its isomorphism class uniquely. One may check using the index of rigidity \cite[Definition 4.2.]{BE04} that in fact the equation corresponding to $\partial_y^n - y^m$ is rigid if and only if $n=1$ or $m=1$.
\subsection{Airy equations and hypergeometrics}\label{subsec:AiryHyp} In this section, we are going to investigate the module $\cM_{n,m}=\cD_{\AA^1_y}/\cD_{\AA^1_y}P_{n,m}$ associated to the operator
$$P_{n,m}=\partial_y^n - y^m\in\CC[y]\langle\partial_y\rangle.$$
Our aim is to describe a procedure for determining its Stokes multipliers at $\infty$, and we want to reduce this question to the question of describing a regular holonomic D-module. More precisely, similarly to \cite[Lemma 7.1.3]{DHMS20}, we want to describe $\cM_{n,m}$ (in a neighbourhood of $\infty$ and after pulling back via a suitable ramification map) by a pullback along a covering map of a Fourier--Laplace transform of a D-module with regular singularities.

Since we want to understand the behaviour of $\cM_{n,m}$ around $\infty$, it suffices to consider this module restricted to $\GM{y}\subset\AA^1_y$. We have isomorphisms
\begin{align}\label{eq:AiryModuleOnGm}
\cM_{n,m}\big|_{\GM{y}}=\cD_{\GM{y}}/\cD_{\GM{y}}P_{n,m}&\iso \cD_{\GM{y}}/\cD_{\GM{y}}\big(y(y^n\partial_y^n-y^{n+m})\big)\\
&\iso \cD_{\GM{y}}/\cD_{\GM{y}}(Qy)\notag\\
&\iso \cD_{\GM{y}}/\cD_{\GM{y}}Q\notag
\end{align}
with the operator
$$Q=\prod_{k=1}^n (y\partial_y-k) - y^{n+m}.$$
The first isomorphism in \eqref{eq:AiryModuleOnGm} follows from the fact that multiplication by $y$ is invertible on $\GM{y}$.
Since $y^n\partial_y^n=\prod_{k=0}^{n-1} (y\partial_y-k)$, the second isomorphism is easily deduced from the relation $y(y\partial_y-k)=(y\partial_y-(k+1))y$. The last isomorphism follows from the short exact sequence
\begin{equation}\label{eq:sequenceOnGm}
0\longrightarrow \cD_{\GM{y}}/\cD_{\GM{y}}Q\longrightarrow \cD_{\GM{y}}/\cD_{\GM{y}}(Qy)\longrightarrow \cD_{\GM{y}}/\cD_{\GM{y}}y \longrightarrow 0,
\end{equation}
noting that $\cD_{\GM{y}}/\cD_{\GM{y}}y\iso 0$.

We now pull back this module along the ramification map $\rho_n\colon \AA^1_v\to \AA^1_y, v\mapsto v^n$. This amounts to a substitution $y=v^n$, $y\partial_y=\frac{1}{n}v\partial_v$ in the operator $Q$, hence we obtain
$$\rho_n^* \cM_{n,m}\big|_{\GM{v}}\iso \cD_{\GM{v}}/\cD_{\GM{v}}L, \qquad L=\prod_{k=1}^n \left(\frac{1}{n}v\partial_v-k\right) - v^{n(n+m)}.$$

It is easy to check that this coincides with the pullback along the covering map $g\colon \AA^1_v\to\AA^1_w, v\mapsto \frac{v^{n+m}}{n+m}$ of the D-module associated with the operator
$$S=\prod_{k=1}^n \left(\frac{n+m}{n}w\partial_w-k\right) - (n+m)^nw^n$$
(obtained from $L$ via the substitution $v^{n+m}=(n+m)w$, $v\partial_v=(n+m)w\partial_w$).
Furthermore, considering the isomorphism of Weyl algebras $\mathsf{F}\colon \CC[z]\langle\partial_z\rangle\to \CC[w]\langle\partial_w\rangle$, $z\mapsto \partial_w, \partial_z\mapsto -w$, we can write
\begin{align*}S=\mathsf{F}(R), \qquad R&=\prod_{k=1}^n \left(\frac{n+m}{n}(-1-z\partial_z)-k\right) - (-1)^n(n+m)^n\partial_z^n\\
&=(-1)^n(n+m)^n\left(\frac{1}{n^n}\prod_{k=1}^n \left(z\partial_z+1+\frac{kn}{n+m}\right) - \partial_z^n\right),\end{align*}
which implies that
$$\rho_n^*\cM_{n,m}\big|_{\GM{v}}\iso g^* \frF^{-1}\big(\cD_{\AA^1_z}/\cD_{\AA^1_z}R\big)\big|_{\GM{v}}.$$

The operator $R$ defines a regular holonomic D-module, which is the pullback of a hypergeometric system along a ramification map:
First, note that
$$(-1)^n\frac{1}{(n+m)^n} z^{n+1} R = \left(\frac{z}{n}\right)^n z\prod_{k=1}^n \left(z\partial_z+1+\frac{kn}{n+m}\right) - z\prod_{k=0}^{n-1} (z\partial_z - k)=\widetilde{R}z$$
with
$$\widetilde{R}= \left(\frac{z}{n}\right)^n\prod_{k=1}^n \left(z\partial_z+\frac{kn}{n+m}\right) - \prod_{k=1}^{n} (z\partial_z - k).$$
An exact sequence as in \eqref{eq:sequenceOnGm} shows that
$$\cD_{\GM{z}}/\cD_{\GM{z}}R\iso \cD_{\GM{z}}/\cD_{\GM{z}}(\widetilde{R}z)\iso \cD_{\GM{z}}/\cD_{\GM{z}}\widetilde{R}.$$
By \cite[Lemma 2.9.5]{Ka90}, the D-modules on $\AA^1_z$ defined by $R$ and $\widetilde{R}$ only depend on their restriction to $\GM{z}$, more precisely we have $j_!\cD_{\GM{z}}/\cD_{\GM{z}}R\iso \cD_{\AA^1_z}/\cD_{\AA^1_z}R$ and $j_!\cD_{\GM{z}}/\cD_{\GM{z}}\widetilde{R}\iso \cD_{\AA^1_z}/\cD_{\AA^1_z}\widetilde{R}$, where $j\colon \GM{z}\hookrightarrow\AA^1_z$ is the inclusion.
Considering the covering map $\gamma_n\colon \AA^1_z \to \AA^1_x, z\mapsto \frac{z^n}{n^n}$ (pulling back along which corresponds to the substitution $\frac{z^n}{n^n}=x$, $z\partial_z=nx\partial_x$), we therefore obtain
\begin{align*}&\quad\cD_{\AA^1_z}/\cD_{\AA^1_z}R\iso \cD_{\AA^1_z}/\cD_{\AA^1_z}\widetilde{R}\iso \gamma_n^*\big(\cD_{\AA^1_z}/\cD_{\AA^1_z}H\big),\\ &H=n^nx\prod_{k=1}^n \left(x\partial_x+\frac{k}{n+m}\right) - n^n\prod_{k=1}^{n} \left(x\partial_x - \frac{k}{n}\right).
\end{align*}

The operator $H$ now defines a hypergeometric equation, and we summarise the outcome of our above computations in the following proposition.
\begin{prop}\label{prop:hyptoairy}
Let $P_{n,m}=\partial_y^n-y^m$, then we have an isomorphism of holonomic algebraic $\cD_{\GM{v}}$-modules (with the morphisms as above)
$$\rho_n^*\cM_{n,m}\big|_{\GM{v}}\iso g^*\frF^{-1}\left(\gamma_n^*\left(\cD_{\AA^1_x}/\cD_{\AA^1_x}\mathrm{Hyp}(\alpha,\beta)\right)\right)\big|_{\GM{v}},$$
where
$$\mathrm{Hyp}(\alpha,\beta)=\prod_{k=1}^n(x\partial_x-\alpha_k)- x \prod_{k=1}^n(x\partial_x-\beta_k)$$
is the hypergeometric operator of type $(n,n)$ with
\begin{align*}
    \alpha_k = \frac{k}{n},\qquad \beta_k = -\frac{k}{n+m}.
\end{align*}
\end{prop}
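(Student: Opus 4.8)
The plan is to assemble the proposition from the chain of identifications built up in the discussion preceding it; no genuinely new ingredient is required beyond organizing those computations into a clean sequence of isomorphisms. First I would restrict $\cM_{n,m}$ to $\GM{y}$, where multiplication by $y$ is invertible. Using $y^n\partial_y^n=\prod_{k=0}^{n-1}(y\partial_y-k)$ together with the commutation rule $y(y\partial_y-k)=(y\partial_y-(k+1))y$, the operator $P_{n,m}$ becomes, up to left and right multiplication by powers of $y$, the Euler-type operator $Q=\prod_{k=1}^{n}(y\partial_y-k)-y^{n+m}$; removing the residual right factor of $y$ is legitimized by the short exact sequence \eqref{eq:sequenceOnGm} and $\cD_{\GM{y}}/\cD_{\GM{y}}y\iso 0$. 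This gives $\cM_{n,m}|_{\GM{y}}\iso\cD_{\GM{y}}/\cD_{\GM{y}}Q$.

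Next I would pass through the two coordinate covers. Pulling back along $\rho_n\colon v\mapsto v^n$ is the substitution $y=v^n$, $y\partial_y=\tfrac1n v\partial_v$, producing the operator $L$, and one checks directly that $\cD_{\GM{v}}/\cD_{\GM{v}}L$ is the $g$-pullback of the operator $S$ on $\AA^1_w$, where $g\colon v\mapsto v^{n+m}/(n+m)$ corresponds to $v^{n+m}=(n+m)w$, $v\partial_v=(n+m)w\partial_w$. The point of leaving the affine line is that $S=\mathsf{F}(R)$ under the Weyl-algebra isomorphism $\mathsf{F}\colon z\mapsto\partial_w,\ \partial_z\mapsto -w$ (which sends $z\partial_z$ to $-1-w\partial_w$ and $\partial_z^n$ to $(-1)^n w^n$), and hence $\cD_{\AA^1_w}/\cD_{\AA^1_w}S\iso\frF^{-1}(\cD_{\AA^1_z}/\cD_{\AA^1_z}R)$ by the conventions of Section~\ref{subsec:enhanced}; here the Fourier--Laplace transform is taken globally on $\AA^1$ and the restriction to $\GM{v}$ is imposed only at the end, so no issue with Fourier transform failing to commute with restriction arises.

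It then remains to recognize $R$ as a hypergeometric operator pulled back along $\gamma_n\colon z\mapsto z^n/n^n$. Multiplying $R$ by a unit and by $z^{n+1}$ splits off a right factor of $z$, leaving $\widetilde R=(\tfrac zn)^n\prod_{k=1}^n(z\partial_z+\tfrac{kn}{n+m})-\prod_{k=1}^n(z\partial_z-k)$; the exact-sequence argument again yields $\cD_{\GM{z}}/\cD_{\GM{z}}R\iso\cD_{\GM{z}}/\cD_{\GM{z}}\widetilde R$, and \cite[Lemma 2.9.5]{Ka90} promotes this to an isomorphism of the associated $\cD_{\AA^1_z}$-modules, both being the $j_!$-extension of their restriction to $\GM{z}$. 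Substituting $x=z^n/n^n$, $z\partial_z=nx\partial_x$ turns $\widetilde R$ into $H=n^n x\prod_{k=1}^n(x\partial_x+\tfrac{k}{n+m})-n^n\prod_{k=1}^n(x\partial_x-\tfrac kn)$, which equals $-n^n\,\mathrm{Hyp}(\alpha,\beta)$ for $\alpha_k=k/n$, $\beta_k=-k/(n+m)$; since the scalar is a unit the two operators define the same $\cD_{\AA^1_x}$-module. Chaining the displayed isomorphisms and restricting to $\GM{v}$ yields the assertion.

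I expect the only real obstacle to be the bookkeeping of parameters: one must track the index shifts precisely as the Euler operator $y\partial_y$ is rescaled by $\rho_n$ and $g$, turned into $-1-z\partial_z$ by the Fourier transform, and rescaled once more by $\gamma_n$, so that the resulting hypergeometric exponents come out as $\alpha_k=k/n$, $\beta_k=-k/(n+m)$ and not some shifted variant — it is the shift by $1$ coming from $\mathsf{F}(z\partial_z)=-1-w\partial_w$ that is easiest to mishandle. The structural steps — passing between $\AA^1$ and $\GM{}$ via $\cD/\cD(\text{unit})\iso 0$ and Katz's lemma, and compatibility of $\rho_n^*$, $g^*$, $\gamma_n^*$ with restriction to the tori — are routine, and no analytic input enters at this stage.
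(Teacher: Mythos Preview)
Your proposal is correct and follows essentially the same approach as the paper: it is a faithful recapitulation of the chain of manipulations carried out in \S\ref{subsec:AiryHyp} (rewriting $P_{n,m}$ as the Euler-type operator $Q$ on $\GM{y}$, passing through $\rho_n$ and $g$ to the operator $S$, recognising $S=\mathsf{F}(R)$, and then identifying $R$ with $\gamma_n^*\mathrm{Hyp}(\alpha,\beta)$ via $\widetilde R$ and Katz's lemma). Your bookkeeping of the index shifts and the scalar $-n^n$ relating $H$ to $\mathrm{Hyp}(\alpha,\beta)$ is accurate.
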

Applying the enhanced solution functor on both sides and using its nice compatibilites with inverse images, Fourier--Laplace transforms as well as the classical solution functor for regular holonomic D-modules (cf.\ §\ref{subsec:enhanced}), one obtains the following corollary. It is the starting point for determining the Stokes multipliers by the results of \cite{DHMS20} (cf.\ Theorem~\ref{theoremQuiverStokes}).

\begin{cor}
There is an isomorphism in $\mathrm{E}^\mathrm{b}(\mathrm{I}\CC_{\AA^1_\infty})$
$$\pi^{-1}\CC_{\GM{v}}\otimes \mathrm{E}\rho_n^{-1} \cS ol^\mathrm{E}_{(\AA^1_y)_\infty}(\cM_{n,m})\iso \pi^{-1}\CC_{\GM{v}}\otimes \mathrm{E}g^{-1}((eF)^\curlywedge),$$
where
$$F=\gamma_n^*\cS ol_{\AA^1_x}\big(\cD_{\AA^1_x}/\cD_{\AA^1_x}\mathrm{Hyp}(\alpha,\beta)\big)[1].$$
In particular, in view of Proposition~\ref{theoremQuiverStokes}, the Stokes matrices at $\infty$ of $\cM_{n,m}$ only depend on the quiver associated to the perverse sheaf $F$.
\end{cor}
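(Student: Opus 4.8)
The plan is to obtain the Corollary by applying the enhanced solution functor $\cS ol^\mathrm{E}$ to the isomorphism of Proposition~\ref{prop:hyptoairy} and translating each operation on the D-module side into its topological counterpart via the compatibilities collected in §\ref{subsec:enhanced}; the only nontrivial bookkeeping concerns a single cohomological shift and the role of the restriction to $\GM v$. First I would record that $\rho_n$, $g$ and $\gamma_n$ are finite flat morphisms between copies of $\AA^1$, hence of relative dimension $0$: their D-module inverse images are therefore exact and commute with $\cS ol^\mathrm{E}$ (and with the classical $\cS ol$) without any shift.

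Next I would exploit that both $\rho_n^*\cM_{n,m}$ and $g^*\frF^{-1}\big(\gamma_n^*(\cD_{\AA^1_x}/\cD_{\AA^1_x}\mathrm{Hyp}(\alpha,\beta))\big)$ are $\cD_{\AA^1_v}$-modules whose restrictions to $\GM v$ coincide by Proposition~\ref{prop:hyptoairy}. Since $\cS ol^\mathrm{E}$ is compatible with restriction to open subsets, the enhanced ind-sheaf $\pi^{-1}\CC_{\GM v}\otimes\cS ol^\mathrm{E}_{(\AA^1_v)_\infty}(\cN)$ depends only on $\cN|_{\GM v}$; so applying $\cS ol^\mathrm{E}_{(\AA^1_v)_\infty}$ to the two modules above and cutting off to $\GM v$ yields
\[\pi^{-1}\CC_{\GM v}\otimes\cS ol^\mathrm{E}_{(\AA^1_v)_\infty}(\rho_n^*\cM_{n,m})\iso \pi^{-1}\CC_{\GM v}\otimes\cS ol^\mathrm{E}_{(\AA^1_v)_\infty}\big(g^*\frF^{-1}(\gamma_n^*(\cD_{\AA^1_x}/\cD_{\AA^1_x}\mathrm{Hyp}(\alpha,\beta)))\big).\]
On the left, compatibility with inverse images turns the term into $\pi^{-1}\CC_{\GM v}\otimes\mathrm{E}\rho_n^{-1}\cS ol^\mathrm{E}_{(\AA^1_y)_\infty}(\cM_{n,m})$.

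For the right-hand side I would peel off the operations one at a time. Compatibility with inverse images gives $\mathrm{E}g^{-1}\cS ol^\mathrm{E}_{(\AA^1_w)_\infty}(\frF^{-1}(\cN))$ with $\cN\vcentcolon=\gamma_n^*(\cD_{\AA^1_x}/\cD_{\AA^1_x}\mathrm{Hyp}(\alpha,\beta))$; Lemma~\ref{lemma:RH-Fourier} replaces $\cS ol^\mathrm{E}_{(\AA^1_w)_\infty}(\frF^{-1}(\cN))$ by $\cS ol^\mathrm{E}_{(\AA^1_z)_\infty}(\cN)^\curlywedge[1]$; and since $\mathrm{Hyp}(\alpha,\beta)$ defines a regular holonomic D-module and regularity is stable under pullback, $\cN$ is regular holonomic, whence $\cS ol^\mathrm{E}_{(\AA^1_z)_\infty}(\cN)\iso e\cS ol_{\AA^1_z}(\cN)\iso e\big(\gamma_n^*\cS ol_{\AA^1_x}(\cD_{\AA^1_x}/\cD_{\AA^1_x}\mathrm{Hyp}(\alpha,\beta))\big)$, using once more the shift-free compatibility of $\cS ol$ with the inverse image along $\gamma_n$. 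As $e$ and $(\cdot)^\curlywedge$ are triangulated, the shift $[1]$ can be absorbed into the argument and one is left with $(eF)^\curlywedge$ for exactly the $F$ of the statement; note in passing that $F$ is genuinely a perverse sheaf, being $\cS ol_{\AA^1_z}(\cN)[1]$ for the regular holonomic module $\cN$. Assembling the identifications gives the displayed isomorphism. The final assertion then follows: the isomorphism exhibits $\pi^{-1}\CC_{\GM v}\otimes\mathrm{E}\rho_n^{-1}\cS ol^\mathrm{E}(\cM_{n,m})$ — which, after the ramification $\rho_n$ and away from $0$, carries all the Stokes data of $\cM_{n,m}$ at $\infty$ (cf.\ §\ref{sec:StokesPhenomena}) — as obtained from $(eF)^\curlywedge$ by the elementary operations $\mathrm{E}g^{-1}$ (pullback along a covering) and $\pi^{-1}\CC_{\GM v}\otimes(\cdot)$ (cutting off $0$), neither of which adds information beyond $(eF)^\curlywedge$; and Theorem~\ref{theoremQuiverStokes} expresses the Stokes multipliers of $(eF)^\curlywedge$ at $\infty$ purely in terms of the quiver of $F$.

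I expect the one genuinely delicate point to be the passage from an isomorphism of $\cD_{\GM v}$-modules to an isomorphism of enhanced ind-sheaves on $(\AA^1_v)_\infty$: one must verify that restriction to the open set $\GM v$ on the analytic/D-module side corresponds exactly to the functor $\pi^{-1}\CC_{\GM v}\otimes(\cdot)$, using that $(\AA^1_v)_\infty$ only records the behaviour at the finite points together with $\infty$ and that $\cS ol^\mathrm{E}$ is local for the open topology. Everything else — in particular attributing the single shift $[1]$ to Lemma~\ref{lemma:RH-Fourier} and not to any of the shift-free inverse-image functors — is routine.
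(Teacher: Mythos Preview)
Your proposal is correct and follows essentially the same approach as the paper: the paper simply states that the corollary is obtained by ``applying the enhanced solution functor on both sides and using its nice compatibilities with inverse images, Fourier--Laplace transforms as well as the classical solution functor for regular holonomic D-modules (cf.\ §\ref{subsec:enhanced}),'' and you have spelled out precisely these compatibilities in the expected order. Your attention to the single shift coming from Lemma~\ref{lemma:RH-Fourier} and to the correspondence between restriction to $\GM v$ and the functor $\pi^{-1}\CC_{\GM v}\otimes(\cdot)$ is exactly the bookkeeping required.
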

\subsection{The hypergeometric equation.} Let $\cH=j^*\cD_{\AA^1_x}/\cD_{\AA^1_x}\mathrm{Hyp}(\alpha,\beta)$ denote the hypergeometric connection with $j: \AA^1_x \setminus \{0,1\} \rightarrow \AA^1_x$ defined by the operator $\mathrm{Hyp}(\alpha, \beta)$ from above. We will describe the monodromy of $\cH$ explicitly to determine the quiver of the perverse sheaf $F$. \par

The fundamental group  $\pi_1(\AA^1_x\setminus \{0,1\}, x_0)$ is generated by simple counter-clock\-wise loops $g_0, g_1, g_\infty$ around $0,1,\infty$ and based at $x_0$ such that 
\[g_0g_1g_\infty = 1. \]
Let
$\rho: \pi_1(\AA^1_x\setminus \{0,1\}, x_0)\rightarrow \GL(V)$
be the monodromy representation of $\cH$ (i.e.\ $V=\cV_{x_0}$ for $\cV$ the local systems of solutions of $\cH$) and $T_i=\rho(g_i)$ for $i\in \{0,1,\infty\}$. Then $T_0$ and $T_\infty$ are semisimple with eigenvalues
\begin{align*}\exp(2\pi i\alpha_k) \,\;\textup{and}\;
\exp(-2\pi i\beta_k) \;\; \textup{for} \; k=1,\dots,n
\end{align*}
respectively. It is well-known that $T_1$ is a pseudo-reflection with non-trivial eigenvalue 
\[\exp(2\pi i\gamma),\; \gamma = \sum_{k=1}^n(\beta_k - \alpha_k ).\]
Note that in our case $T_1$ has finite order and is actually a complex reflection. The monodromy representation $\rho$ may be described explicitly by the following rigidity result of Levelt \cite[Theorem 3.5]{BH89}. 
\begin{theorem}\label{thm:levelt} Let $a_1,\dots,a_n,b_1,\dots, b_n\in \CC^\times$ such that $a_j\neq b_k$ for all $j,k$. Write 
\begin{align*}\prod_{i=1}^n (X-a_i) = X^n+A_1X^{n-1}+\dots+A_n, \\ \prod_{i=1}^n (X-b_i) = X^n+B_1X^{n-1}+\dots+B_n. \end{align*}
Given a complex vector space $V$ and invertible linear transformations $A,B\in \GL(V)$ with eigenvalues $a_i$ and $b_i$, respectively, and such that $AB^{-1}$ is a pseudo-reflection, then there is a choice of basis of $V$ such that in that basis $A$ and $B$ are given by companion matrices 
\[\begin{pmatrix} 0 & & & & -A_n \\
1 & 0 & & & -A_{n-1} \\
 & \ddots & \ddots & & \vdots \\
 & & & 0 & -A_2 \\
 & & & 1 & -A_1 
\end{pmatrix}, \, \begin{pmatrix} 0 & & & & -B_n \\
1 & 0 & & & -B_{n-1} \\
 & \ddots & \ddots & & \vdots \\
 & & & 0 & -B_2 \\
 & & & 1 & -B_1 
\end{pmatrix}.
\]
\end{theorem}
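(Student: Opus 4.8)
The plan is to argue by induction on $n = \dim V$, exploiting the hypothesis that $AB^{-1}$ is a pseudo-reflection to cut down the dimension by one at each step. First I would record the basic consequence of the pseudo-reflection hypothesis: writing $C = AB^{-1}$, there is a hyperplane $W \subset V$ on which $C$ acts as the identity, equivalently $A|_W = B|_W$ as maps $W \to V$. Dually, the image of $C - \mathrm{id}$ is a line $L$, and $(C-\mathrm{id})V = L$. The strategy is to choose a vector that, together with its images under $A$ (equivalently $B$), spans $V$; the companion-matrix shape is then precisely the statement that such a cyclic vector exists, with the last column encoding the characteristic polynomials.

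\smallskip

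The key steps, in order, would be as follows. \emph{Step 1: produce a cyclic vector.} Pick any $e_1 \notin W$ and set $e_{j+1} := B e_j$ for $j = 1, \dots, n-1$; I claim $e_1, \dots, e_n$ form a basis. Suppose not, and let $d < n$ be minimal with $e_{d+1} \in \mathrm{span}(e_1, \dots, e_d) =: U$. Then $U$ is $B$-stable. One checks that $U$ is also $A$-stable: indeed $A = B$ on $W$, and since $e_1 \notin W$ while the construction can be arranged (replacing $e_1$ by $e_1 - w$ for suitable $w \in W$ if necessary, or by a dimension count on $U \cap W$) so that $U$ is spanned by $e_1$ together with vectors of $U \cap W$, on which $A$ and $B$ agree; hence $A U \subseteq B(U\cap W) + A\,\mathrm{span}(e_1) \subseteq U + \mathrm{span}(Ae_1)$, and $Ae_1 = B e_1 + (A-B)e_1 \in U + L$. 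The delicate point — and the one I expect to be the main obstacle — is controlling the interaction between the flag generated by $e_1$ under $B$ and the hyperplane $W$, so that no proper nonzero subspace is simultaneously $A$- and $B$-stable; this is exactly where irreducibility of the pair $(A,B)$ (forced, for generic eigenvalue data, by $a_j \neq b_k$ together with the pseudo-reflection condition) must be invoked to derive a contradiction. Once $U = V$ is excluded, $e_1, \dots, e_n$ is a basis.

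\smallskip

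\emph{Step 2: read off the matrix of $B$.} In the basis $e_1, \dots, e_n$ we have $Be_j = e_{j+1}$ for $j < n$ by construction, and $Be_n = -B_n e_1 - \dots - B_1 e_n$ because the characteristic polynomial of $B$ is $\prod(X - b_i) = X^n + B_1 X^{n-1} + \dots + B_n$ (Cayley--Hamilton applied to the cyclic vector $e_1$). This gives $B$ the stated companion form. \emph{Step 3: read off the matrix of $A$.} Write $A = B + (A - B)$. The operator $A - B$ vanishes on $W$ and has rank $\leq 1$, so it has the form $v \mapsto \ell(v)\, x_0$ for a fixed vector $x_0$ and a linear functional $\ell$ vanishing on $W$. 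Hence $A$ and $B$ differ only in how they act modulo $W$, and in the chosen basis $A$ and $B$ can differ only in their last column (one must check here that the first $n-1$ basis vectors can be taken inside $W$, which follows from $\dim W = n-1$ and a mild adjustment of the cyclic vector in Step 1). Finally, the last column of $A$ is forced: its entries are determined by requiring the characteristic polynomial of the resulting companion matrix to be $\prod(X - a_i) = X^n + A_1 X^{n-1} + \dots + A_n$, which pins down the column as $(-A_n, \dots, -A_1)^t$. This completes the construction of the basis.

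\smallskip

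The only genuine content beyond bookkeeping is Step 1, the existence of a cyclic vector; everything else is linear algebra once the basis is in hand. In the application to $\mathrm{Hyp}(\alpha,\beta)$ the eigenvalue hypothesis $a_j \neq b_k$ holds because $\alpha_k = k/n$ and $\beta_k = -k/(n+m)$ never agree mod $\mathbb{Z}$ when $\gcd(n,m) = 1$, so the theorem applies directly to $T_0$ and $T_\infty$ with $T_1 = T_0 T_\infty$ (or its inverse, depending on the braid relation convention) playing the role of the pseudo-reflection $AB^{-1}$.
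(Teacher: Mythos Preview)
The paper does not give its own proof of this theorem; it is quoted as Levelt's result via Beukers--Heckman \cite[Theorem~3.5]{BH89}. So there is nothing in the paper to compare your argument against. Your outline is on the right track, but it contains a genuine gap that would stop the proof from closing.

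The problem is the choice of cyclic vector. In Step~1 you take $e_1 \notin W$, where $W=\ker(A-B)$ is the hyperplane fixed by $AB^{-1}$. In Step~3 you then need $e_1,\dots,e_{n-1}\in W$ so that $A$ and $B$ agree on the first $n-1$ basis vectors and differ only in the last column. These two requirements are contradictory, and the ``mild adjustment'' you mention is not mild: it is the entire content of the argument. (You also announce an induction on $n$ that is never used.)

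The standard fix is to pick $e_1$ inside the intersection
\[
W' \;=\; W \cap A^{-1}W \cap \cdots \cap A^{-(n-2)}W,
\]
which is nonzero by a codimension count. Then $e_j\vcentcolon=A^{j-1}e_1\in W$ for $j\le n-1$ automatically, so $A e_j=B e_j=e_{j+1}$ for $j<n$. For linear independence, suppose the $A$-cyclic span $U=\mathrm{span}(e_1,\dots,e_d)$ of $e_1$ has $d<n$. Then $U\subset W$, hence $A|_U=B|_U$, so $A$ and $B$ share an eigenvalue on $U$, contradicting $a_i\neq b_j$. Thus $d=n$, and Cayley--Hamilton (your Steps~2--3) gives both companion matrices in this single basis. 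Note that irreducibility of the pair $(A,B)$, which you invoke, is true but unnecessary here; the eigenvalue disjointness alone does the work once $e_1$ is chosen in $W'$.
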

Since in our case $\alpha_i = \frac{i}{n}$, the eigenvalues of $T_0$ are precisely the $n$-th roots of unity. Therefore, up to isomorphism, we may assume that $T_0$ is a cyclic permutation matrix corresponding to the $n$-cycle $(n \dots 2 \, 1)$, i.e. the companion matrix of $X^n-1$. \par
\subsection{Monodromy of the pullback} \label{sub:localmon} The computations in this section are similar to those in \cite{H20}, Section 7. In our situation, pulling back along $\gamma_n$ trivialises the local monodromy at $z=0$, allowing us simplify the argument by extending the local system across the origin.
Let $\mu_n\subset \Gm$ denote the set of $n$-th roots of unity and set $\zeta_n\vcentcolon=\exp(\frac{2\pi i}{n})$. Let \[U=\AA^1_z \setminus ( n\mu_n \cup \{0\}).\] The map $\gamma:=\gamma_n|_{U} : U \rightarrow \AA^1_x\setminus \{0,1\}$ is a cyclic Galois cover with Galois group $\mu_n$. Let $\cV$ denote the local system of solutions of $\cH$. Then $\gamma^*\cV$ is $\mu_n$-equivariant. The following lemma is easy to check.

\begin{lemma}\label{lem:equivcov}
Let $X$ be a topological space with an action of a finite group $G$. Let $x\in X$ be a fixed point of the $G$-action and $\cV$ a $G$-equivariant local system on $X$ with fibre $V$ over $x$ and monodromy representation
\[\rho:\pi_1(X,x)\rightarrow \GL(V).\]
Then $G$ acts on $V$ and we have
\[\rho(g_*(\gamma))(v)=g(\rho(\gamma)(g^{-1}v)) \]
for all $v\in V$, $g\in G$ and $\gamma\in \pi_1(X,x)$, where $g_*$ denotes the automorphism of $\pi_1(X,x)$ induced by the action map. 
\end{lemma}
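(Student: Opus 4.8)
The plan is to run the standard dictionary between local systems on $X$ and representations of $\pi_1(X,x)$, keeping track of what the $G$-equivariant structure contributes.

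\textbf{Step 1 (the $G$-action on $V$).} A $G$-equivariant structure on $\cV$ amounts to a family of isomorphisms of local systems $\Phi_g\colon g^*\cV\isom\cV$ satisfying the cocycle condition $\Phi_{gh}=\Phi_h\circ h^*\Phi_g$. Since $x$ is fixed by $G$, the stalk $(g^*\cV)_x$ is canonically $\cV_{g(x)}=\cV_x=V$, so $\Phi_g$ induces $\Phi_{g,x}\in\GL(V)$, and the cocycle condition shows that $g\cdot v\vcentcolon=\Phi_{g,x}^{-1}(v)$ is a left $G$-action on $V$. (Working instead with isomorphisms in the opposite direction would change this to its inverse; the convention here is the one matching the statement.)

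\textbf{Step 2 (naturality with parallel transport).} For a path $\delta$ from $x$ to a point $y$, parallel transport in $g^*\cV$ along $\delta$ agrees — under the identifications of Step~1 — with parallel transport in $\cV$ along $g\circ\delta$, a path from $g(x)=x$ to $g(y)$. Because $\Phi_g$ is a morphism of local systems it intertwines parallel transports, giving $\Phi_{g,y}\circ P^{\cV}_{g\circ\delta}=P^{\cV}_\delta\circ\Phi_{g,x}$. Specialising to a loop $\delta=\gamma$ at $x$, the path $g\circ\gamma$ is again a loop at $x$ and, since $g(x)=x$, it represents $g_*([\gamma])$; taking $y=x$ yields $\Phi_{g,x}\circ\rho(g_*(\gamma))=\rho(\gamma)\circ\Phi_{g,x}$, i.e. $\rho(g_*(\gamma))=\Phi_{g,x}^{-1}\circ\rho(\gamma)\circ\Phi_{g,x}$. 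Inserting the action from Step~1, this is exactly $\rho(g_*(\gamma))(v)=g\big(\rho(\gamma)(g^{-1}v)\big)$.

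\textbf{Main obstacle.} There is no genuine difficulty here; the only point requiring care is the bookkeeping of conventions — the direction in which the structure isomorphisms $\Phi_g$ are taken, the resulting variance of the induced $G$-action on $V$, and the identification $[g\circ\gamma]=g_*[\gamma]$ (which uses that $x$ is a \emph{fixed} point, so no connecting path from $x$ to $g(x)$ is needed). These must be pinned down so that all variances match the displayed formula. An equivalent, perhaps cleaner, packaging: pass to the universal cover $p\colon(\widetilde X,\tilde x)\to(X,x)$; each $g\in G$ lifts uniquely to $\tilde g$ with $\tilde g(\tilde x)=\tilde x$, these lifts compose strictly and normalise the deck group $\pi_1(X,x)$ via $\gamma\mapsto g_*(\gamma)$, and a $G$-equivariant local system is precisely a representation on $V$ of the group generated by $\pi_1(X,x)$ and $G$ subject to the relations $g\gamma g^{-1}=g_*(\gamma)$. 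The asserted identity is then simply this relation evaluated on $v\in V$.
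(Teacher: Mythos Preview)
Your argument is correct and complete. The paper does not actually supply a proof of this lemma; it merely prefaces the statement with ``the following lemma is easy to check'' and moves on. Your Steps~1--2 (unwinding the equivariant structure at the fixed point and using that a morphism of local systems intertwines parallel transport) are precisely the routine verification the authors have in mind, and your bookkeeping of the variances is consistent with the displayed formula.
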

\begin{prop}
The local system $\gamma^*\cV$ extends to a $\mu_n$-equivariant local system on $\AA^1_z\setminus n\mu_n$. Its local monodromy at $n\zeta_n^k\in n\mu_n$ is equal to $T_0^kT_1T_0^{-k}$. 
\end{prop}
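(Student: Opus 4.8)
The plan is to prove the statement in three stages: extend $\gamma^*\cV$ across the origin, propagate the $\mu_n$-equivariant structure by uniqueness of the extension, and then pin down the local monodromies by reducing to a single direct computation at $z=n$ combined with Lemma~\ref{lem:equivcov}. Throughout I would follow the template of \cite{H20}, Section~7.

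First I would extend $\gamma^*\cV$ across the origin. Since $\gamma_n$ is totally ramified of order $n$ over $x=0$, a small counter-clockwise loop around $z=0$ is sent by $\gamma_n$ to $n$ times a small loop around $x=0$, so the local monodromy of $\gamma^*\cV$ at $z=0$ is $T_0^n$. But $T_0$ is conjugate to the companion matrix of $X^n-1$, hence semisimple of order exactly $n$, so $T_0^n=\mathrm{id}$. As a local system on a punctured disc with trivial monodromy extends uniquely to the disc, $\gamma^*\cV$ extends (uniquely) to a local system $\widetilde\cV$ on $\AA^1_z\setminus n\mu_n$. Next, the $\mu_n$-action on $U$ by the deck transformations $z\mapsto\zeta z$ extends to $\AA^1_z\setminus n\mu_n$, fixing $0$ and permuting $n\mu_n$; for each $\zeta\in\mu_n$ the comparison isomorphism $\zeta^*(\gamma^*\cV)\isom\gamma^*\cV$ on $U$ extends uniquely over $0$, because both source and target do, and the cocycle compatibilities persist by the same uniqueness. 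Hence $\widetilde\cV$ is naturally $\mu_n$-equivariant on $\AA^1_z\setminus n\mu_n$.

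Then I would compute the local monodromies. Fixing the base point $x_0$ of the hypergeometric monodromy, a preimage $z_0\in\gamma_n^{-1}(x_0)$, and a path in $\AA^1_z\setminus n\mu_n$ from $z_0$ to $0$, parallel transport identifies $\widetilde\cV_0\cong\cV_{x_0}=V$. Since $\gamma_n$ is étale at $z=n$ and $\gamma_n(n)=1$, one can choose the connecting path so that the loop $\ell_0\in\pi_1(\AA^1_z\setminus n\mu_n,0)$ around $z=n$ maps to the generator $g_1$; then $\widetilde\cV$ has local monodromy $\rho(g_1)=T_1$ at $z=n=n\zeta_n^0$. For general $k$, the deck transformation $\delta_k\colon z\mapsto\zeta_n^k z$ sends $n$ to $n\zeta_n^k$ and carries $\ell_0$ to a loop around $n\zeta_n^k$, so by Lemma~\ref{lem:equivcov} (applied with $G=\mu_n$ acting on $X=\AA^1_z\setminus n\mu_n$ with fixed point $0$) the local monodromy of $\widetilde\cV$ at $n\zeta_n^k$ equals the conjugate of $T_1$ by the action of $\delta_k$ on $V$. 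It then remains to identify that action with $T_0^k$: the restriction of $\widetilde\cV$ to a disc around $0$ is trivial with its $\mu_n$-action, its descent along $x=z^n$ is $\cV$ near $x=0$ with monodromy $T_0$, and one loop around $x=0$ corresponds to the transformation $z\mapsto\zeta_n z=\delta_1(z)$, whence $\delta_1$ acts on $V$ by $T_0$ and $\delta_k$ by $T_0^k$. This yields local monodromy $T_0^kT_1T_0^{-k}$ at $n\zeta_n^k$.

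The routine parts here are the extension lemma for local systems with trivial monodromy and the bookkeeping of base points and paths. The genuinely delicate point, which I expect to be the main obstacle, is the last one: matching the $\mu_n$-equivariant structure near $0$ with the cyclic structure of $T_0$ with the correct orientation, so that $\delta_k$ acts by $T_0^{+k}$ rather than $T_0^{-k}$ — the latter would attach the conjugates $T_0^kT_1T_0^{-k}$ to the punctures $n\zeta_n^k$ in the opposite cyclic order. Getting this sign right (and compatible with the choice of $g_1$ and of the companion-matrix basis from Theorem~\ref{thm:levelt}) is where the care is needed, whereas the existence of the equivariant extension is essentially formal.
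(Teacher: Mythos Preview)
Your proposal is correct and follows essentially the same route as the paper: extend across $0$ using $T_0^n=\id$, carry the $\mu_n$-equivariance along by uniqueness of the extension, identify the action of the generator $\zeta_n$ on the fibre at $0$ with $T_0$ via covering-space theory, and then use Lemma~\ref{lem:equivcov} together with the base case at $z=n$ (where $\gamma_*$ sends the small loop to $g_1$) to obtain the conjugates $T_0^kT_1T_0^{-k}$. The orientation issue you flag is indeed the only delicate point, and the paper handles it in the same way, by appealing to the standard identification of the deck-transformation action with the monodromy $T_0$.
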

\begin{proof} For $u\in U$ with $\gamma(u)=x_0$, the map $\gamma$ induces an injective homomorphism 
\[\pi_1(U,u) \xrightarrow{\gamma_*} \pi_1(\AA^1_x\setminus \{0,1\},x_0) \]
and the monodromy representation of $\gamma^*\cV$ is obtained by restricting $\rho$ along this homomorphism. We denote this restriction by $\rho':=\rho\circ\gamma_*$. Since $\gamma^*\cV$ has trivial local monodromy at $0$, it extends uniquely to $\AA^1_z\setminus n\mu_n$, and since $0$ is fixed by the $\mu_n$-action, the extension is $\mu_n$-equivariant. It follows from standard theory of covering spaces that the action of the generator $\zeta_n$ of the Galois group $\mu_n$ on $V=(\gamma^*\cV)_u$ agrees with the action of the local monodromy $T_0$ of $\cV$ at $0$. \par 
Choose a set of generators $g_j'$ of $\pi_1(\AA^1_z\setminus n\mu_n,u)$ such that $g_j'$ is a simple counter-clockwise loop around $n\zeta_n^j$ and such that $g'_\infty g'_n\dots g'_1=1$. \par
Note that $\mu_n$ acts on $\pi_1(\AA^1_z \setminus n\mu_n,0)$ by $(\zeta_* g)(t)=\zeta g(t)$. Choose a small simple counter-clockwise loop $g''_1$ based at $0$ such that $\pi_1(\AA^1_z \setminus n\mu_n,0)$
is generated by $g''_1, \zeta_ng''_1, \dots, \zeta_n^{n-1}g''_1$.  We may choose an isomorphism 
\[\begin{tikzcd}
\pi_1(\AA^1_z\setminus n\mu_n,0) \ar{r}[swap]{\cong}{\phi}  & \pi_1(\AA^1_z\setminus n\mu_n,u)
\end{tikzcd}\]
such that $\phi(\zeta_n^{j}g''_1)=g_j'$. Define $\mu:=\rho'\circ\phi$. By Lemma \ref{lem:equivcov} we know that
\[\mu(\zeta_{n,*}(g))(v) = \zeta_n.\mu(g)(\zeta_n^{-1}.v)\]
for any $v\in V$ and $g\in \pi_1(\AA^1_z\setminus n\mu_n,0)$.
This implies that
\[\rho'(g_{j+1}') = T_0\rho'(g_j')T_0^{-1}. \]
To check that $\gamma_*(g_1')$ is homotopic to $g_1\in \pi_1(\AA^1_x \setminus \{0,1\}, x_0)$, one may choose representatives for $g_1'$ and $g_1$ and write down an explicit homotopy. The claim follows. 
\end{proof}
Up to isomorphism we therefore obtain an explicit description of the local monodromy of the perverse sheaf $F$ in terms of conjugates of a single companion matrix. 
\begin{remark}
In the terminology of Beukers--Heckmann \cite[§5]{BH89} the group generated by the $T_0^k T_1 T_0^{-k}$ is the reflection subgroup of the hypergeometric group corresponding to $\cH$. The shape of the local exponents implies that this group acts irreducibly on $V$. If this group is finite, it is a complex reflection group. The complex reflection groups that may occur as reflection subgroups of hypergeometric groups are listed in \cite[§8]{BH89}.
\end{remark}

\subsection{Stokes multipliers for generalised Airy equations} Recall that $F=\gamma_n^*\cS ol_{\AA^1_x}\big(\cD_{\AA^1_x}/\cD_{\AA^1_x}\mathrm{Hyp}(\alpha,\beta)\big)[1]$. 
\begin{lemma} Denoting by $k:\AA^1_x \setminus n\mu_n \hookrightarrow \AA^1_x$ the open embedding, we have
\[k_{!*}k^* F \cong F, \]
i.e.\ $F$ is the minimal extension of its restriction to $\AA^1_x \setminus n\mu_n$. 
\end{lemma}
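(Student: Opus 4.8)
The statement to prove is that $F = \gamma_n^*\,\mathcal Sol_{\AA^1_x}(\cD_{\AA^1_x}/\cD_{\AA^1_x}\mathrm{Hyp}(\alpha,\beta))[1]$ satisfies $k_{!*}k^*F \cong F$, where $k\colon \AA^1_x\setminus n\mu_n\hookrightarrow\AA^1_x$. Since middle extension is characterized by having no sub- or quotient-object supported on the complement $n\mu_n$, it suffices to show that the perverse sheaf $F$ has no simple constituent supported on a point of $n\mu_n$. The plan is to reduce this to a local statement about the stalk and costalk of $F$ at each $n\zeta_n^k$, using the explicit description of the local monodromy obtained in the previous subsection, and then to invoke the fact that the reflection subgroup of the hypergeometric group acts irreducibly.

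**Main steps.**
First, I would use that $\gamma_n\colon\AA^1_z\to\AA^1_x$ (in the notation there it may be cleaner to keep working on $\AA^1_x$ with its own ramification, but let me stay with the statement as written) is finite and that $\mathcal Sol$ commutes with proper pushforward/pullback; in particular $F$ is a perverse sheaf on $\AA^1_x$ (after the shift $[1]$) that is a local system on $\AA^1_x\setminus n\mu_n$, in fact the one analyzed above with local monodromy $T_0^kT_1T_0^{-k}$ around $n\zeta_n^k$. A perverse sheaf on a curve which is a shifted local system on the complement of a finite set $\Sigma$ equals the middle extension from $\Sigma^c$ if and only if, at each $s\in\Sigma$, neither the stalk $\mathcal H^0(i_s^*F)$ nor the costalk $\mathcal H^0(i_s^!F)[1]$-type invariants produce a punctual summand; concretely, writing $V$ for the nearby fibre and $T$ for the local monodromy at $s$, the middle extension condition at $s$ is equivalent to $\ker(T-1)$ mapping onto the vanishing cycles with the correct variation, which for a rank-one pseudo-reflection $T$ reduces to: $T\neq 1$ \emph{and} no other degeneracy. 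I would therefore compute $T_0^kT_1T_0^{-k}$ and observe it is a nontrivial complex reflection (its nontrivial eigenvalue is $\exp(2\pi i\gamma)$ with $\gamma=\sum(\beta_k-\alpha_k)\neq 0$ under the coprimality hypothesis on $(n,m)$), hence each local monodromy is $\neq 1$; this already rules out a skyscraper \emph{summand} coming from $j_!$ versus $j_*$ at any individual point.

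**The real obstacle and how to handle it.**
The subtle point is not the individual local monodromies but the possibility of a \emph{global} punctual constituent, i.e.\ that $F$ could be a nontrivial extension involving a skyscraper at some $n\zeta_n^k$ even when each local monodromy is nontrivial — this happens precisely when the local system $k^*F$, though irreducible as a representation of $\pi_1$, has a "hidden" degeneracy forcing $j_!(k^*F)\to j_*(k^*F)$ to be non-injective or non-surjective as a map of perverse sheaves. I expect this to be the main step, and the way to kill it is to invoke irreducibility of $k^*F$ as a local system, which was recorded in the Remark: the reflection subgroup generated by the $T_0^kT_1T_0^{-k}$ acts irreducibly on $V$ (this follows from the shape of the local exponents $\alpha_k=k/n$, $\beta_k=-k/(n+m)$ and the standard irreducibility criterion for hypergeometric groups in Beukers–Heckman). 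An irreducible local system on $\AA^1_x\setminus n\mu_n$ has the property that $k_{!*}k^*F$ is simple, and any perverse sheaf $F$ restricting to it with $k^*k_{!*}k^*F\cong k^*F$ must \emph{contain} $k_{!*}k^*F$ as its unique non-punctual constituent; then I finish by showing $F$ has no punctual constituent at all, using that $F$ comes from a $\cD$-module with \emph{regular} singularities and that the hypergeometric $\cD$-module $\cD/\cD\,\mathrm{Hyp}(\alpha,\beta)$ is itself irreducible (again by coprimality, via Katz's irreducibility criterion), so its pullback $\gamma_n^*$ along the finite map has no punctual submodule or quotient — equivalently, $F = \gamma_n^*(\text{irreducible IC sheaf})$ has no summand supported in dimension $0$. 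Combining: $k_{!*}k^*F$ and $F$ are both extensions of the same irreducible local system with the same (regular, non-punctual) character, hence isomorphic.

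**Remark on bookkeeping.**
I would phrase the final comparison cleanly by noting the general fact that for $k$ an open immersion with closed complement a finite set of points, $k_{!*}k^*F \cong F$ for $F$ perverse on a curve is equivalent to the two conditions: (i) $F$ has no nonzero subobject supported on $n\mu_n$, and (ii) $F$ has no nonzero quotient supported on $n\mu_n$; then (i) and (ii) each follow from the irreducibility of $\cD/\cD\,\mathrm{Hyp}(\alpha,\beta)$ (whence of its pullback, which has no sub/quotient $\cD$-module supported at a point, as such would force the hypergeometric module to have one) transported through the Riemann–Hilbert correspondence, which is exact and commutes with the relevant pullbacks. The coprimality of $n$ and $m$ enters exactly at the point where one needs $\mathrm{Hyp}(\alpha,\beta)$ irreducible, i.e.\ that the $\alpha_k$ and $\beta_k$ do not coincide modulo $\ZZ$ — which is immediate since $\alpha_k\in\tfrac1n\ZZ$ and $\beta_k\in\tfrac1{n+m}\ZZ$ with $\gcd(n,n+m)=1$ forcing $\alpha_j\not\equiv\beta_k\bmod\ZZ$.
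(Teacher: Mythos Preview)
Your proposal is correct and, by the time you reach the ``bookkeeping'' paragraph, converges on exactly the paper's argument: the hypergeometric $\cD$-module is irreducible on $\GM{x}$ (by Katz, using coprimality of $n$ and $m$), hence its solution perverse sheaf $H$ is a middle extension with no sub- or quotient object supported at $1$; since $\gamma_n$ is a local isomorphism near each $z\in n\mu_n$, any sub- or quotient of $F$ supported at $z$ would transport via the local inverse $\widetilde{\gamma}$ to one of $H$ supported at $1$, and is therefore zero. The paper goes straight to this local-isomorphism step, whereas your earlier discussion of the nontriviality of each $T_0^kT_1T_0^{-k}$ and of the irreducibility of the reflection subgroup is an unnecessary detour: knowing the local monodromies alone does not pin down which extension $F$ is (that is precisely the ``obstacle'' you identify), and the reflection-subgroup irreducibility, while true, is not what settles the matter---the \'etale-local comparison with $H$ does.
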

\begin{proof}
Since $n$ and $m$ are coprime, the hypergeometric $\cD$-module $$\cD_{\GM{x}}/\cD_{\GM{x}}\mathrm{Hyp}(\alpha,\beta)$$ is irreducible on $\Gm$, cf.\ \cite[Corollary 3.2.1]{Ka90}. By \cite[Corollary 2.9.6.1]{Ka90} we have \[\cD_{\GM{x}}/\cD_{\GM{x}}\mathrm{Hyp}(\alpha,\beta) \cong h_{!*}\cH \]
where $h:\GM{x}\setminus \{1\} \rightarrow \GM{x}$ is the open embedding. Therefore the same holds for the perverse sheaf of solutions $H$ on $\GM{x}$. The minimal extension is characterised by the property that $H$ has no non-zero quotients or subobjects supported on $\{1\}$. Now let $z\in n\mu_n$ and choose a neighbourhood $U$ of $z$ such that 
\[\gamma|_U : U \rightarrow \gamma(U) \]
is a local isomorphism with inverse $\widetilde{\gamma}$. To prove the assertion it is enough to show that $F$ has no subobject or quotient supported at $z$. Assume $G_z$ is either a subobject or quotient of $F$ supported at $z$. Then $\widetilde{\gamma}^*(G_z|_U)$ is a subobject or quotient of $H|_{\gamma(U)}$ supported at $\widetilde{\gamma}(z)=1$ and therefore $G_z=0$. This proves the claim. 
\end{proof}
To describe the quiver of $F$ we need to choose $\fa$ and $\fb$ as in Section \ref{ss:pervquiv} and describe the exponential dominance order on the set of singularities $n\mu_n$ of $F$. This is best done geometrically: 
Let $\fb=\exp(\frac{2\pi i}{3n})$ and accordingly we choose $\fa=\frac{i}{\fb}$. We then look at the set $\fb \mu_n$ and order it by the size of the real parts. This order may be visualised as a zig-zag from right to left (see Fig.~\ref{fig:ordering}), i.e. for $\zeta=\zeta_n=\exp(\frac{2\pi i}{n})$ we have
\[1 > \zeta^{n-1} > \zeta > \zeta^{n-2} > \zeta^2 > \dots \]

\begin{figure}[t]
	\centering
	\begin{tikzpicture}[scale=1.5]
		\node at (-1.25,1.25) {\color{Green}$\mu_n$};
		\draw[->,Gray,thick] (-1.5,0) -- (1.5,0);
		\node at (1.7,-0.2) {\small\color{Gray}$\mathrm{Re}\, z$};
		\draw[->,Gray,thick] (0,-1.5) -- (0,1.5);
		\node at (-0.2,1.7) {\small\color{Gray}$\mathrm{Im}\, z$};
		\foreach \x in {-2,...,2} {
			\draw[Gray,opacity=0.5] (\x*0.5,-1.5)--(\x*0.5,1.5);
			\draw[Gray,opacity=0.5] (-1.5,\x*0.5)--(1.5,\x*0.5);
		}
		\foreach \n in {0,...,6} {
			\fill[Green] ({cos(360/7*\n)},{sin(360/7*\n)}) circle (1.3pt);
			\ifthenelse{\n=0}{\draw[Red,thick] ({cos(360/7*\n)},{sin(360/7*\n)}) circle (1.4pt);}{}
		}
		\node at (1.15,0.15) {\small \color{Green}$1$};
		\node at (0.78,0.8) {\small\color{Green} $\zeta$};
		\node at (0.93,-0.85) {\small\color{Green} $\zeta^{n-1}$};
	\end{tikzpicture}\hspace{1cm}
	\begin{tikzpicture}[scale=1.5, decoration={markings, 
			mark= at position 0.5 with {\arrow{angle 60}}}] 
		\node at (-1.25,1.25) {\color{Green}$\fb\mu_n$};
		\draw[->,Gray,thick] (-1.5,0) -- (1.5,0);
		\node at (1.7,-0.2) {\small\color{Gray}$\mathrm{Re}\, z$};
		\draw[->,Gray,thick] (0,-1.5) -- (0,1.5);
		\node at (-0.2,1.7) {\small\color{Gray}$\mathrm{Im}\, z$};
		\foreach \x in {-2,...,2} {
			\draw[Gray,opacity=0.5] (\x*0.5,-1.5)--(\x*0.5,1.5);
			\draw[Gray,opacity=0.5] (-1.5,\x*0.5)--(1.5,\x*0.5);
		}
		\foreach \n in {0,...,6} {
			\ifthenelse{\n<4}{\draw[dotted,thick,postaction={decorate}] ({cos(360/7*\n+360/7/3)},{sin(360/7*\n+360/7/3)})--({cos(360/7*(-\n-1)+360/7/3)},{sin(360/7*(-\n-1)+360/7/3)});}{\draw[dotted,thick,postaction={decorate}] ({cos(360/7*\n+360/7/3)},{sin(360/7*\n+360/7/3)})--({cos(360/7*(-\n)+360/7/3)},{sin(360/7*(-\n)+360/7/3)});}
		}
		\foreach \n in {0,...,6} {
			\fill[Green] ({cos(360/7*\n+360/7/3)},{sin(360/7*\n+360/7/3)}) circle (1.3pt);
			\ifthenelse{\n=0}{\draw[Red,thick] ({cos(360/7*\n+360/7/3)},{sin(360/7*\n+360/7/3)}) circle (1.4pt);}{}
		}
		\node at (1.1,0.33) {\small \color{Green}$\fb$};
		\node at (0.57,1.05) {\small\color{Green} $\fb\zeta$};
		\node at (1.1,-0.75) {\small\color{Green} $\fb\zeta^{n-1}$};
	\end{tikzpicture}
	\caption{A total ordering on the set $\mu_n$ of $n$-th roots of unity (shown on the left-hand side) is obtained by multiplying them with $\fb=\exp(\frac{2\pi i}{3n})$ and ordering them by the size of their real parts, which results in the zig-zag scheme indicated on the right-hand side. The picture illustrates the case $n=7$, but it is easily checked that this procedure works for any $n\in\mathbb{Z}_{>0}$.}
	\label{fig:ordering}
\end{figure}
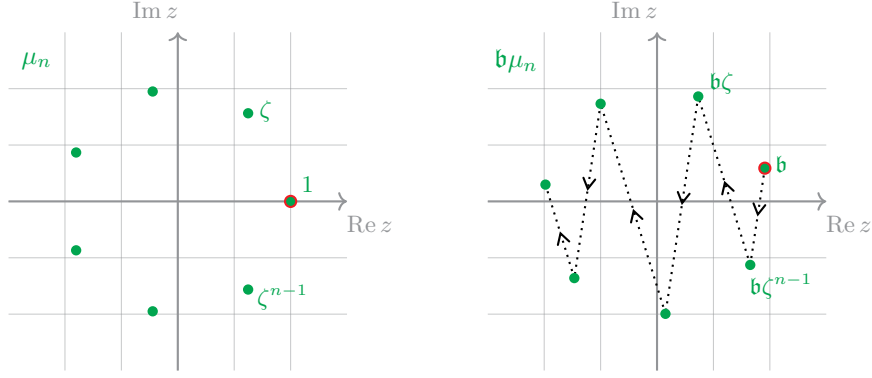

The following lemma is adapted from  \cite[Corollary 4.2.]{H20}. It is the standard description of the quiver associated to a minimal extension sheaf. 
\begin{lemma}
Let $D$ be a disk in $\CC$, $\sigma \in D$ and $D^\times = D\setminus \{\sigma\}$ with embedding $i: D^\times \rightarrow D$. Let $K$ be a perverse sheaf on $D$ such that $i^*K[-1]=L$ is a local system. Then $K\cong i_{!*}i^*K$ if and only if its quiver is isomorphic to 
\[\begin{tikzcd} \Psi(K) \arrow[r, shift left=1, "1-T_\sigma"]& \arrow[l, shift left=1, "\iota_\sigma"] \Phi_\sigma(K)= \mathrm{im}(1-T_\sigma),
\end{tikzcd}\]
where $T_\sigma$ is the local monodromy of $L$ at $\sigma$ and $\iota_\sigma$ is the natural embedding. 
\end{lemma}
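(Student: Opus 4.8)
The plan is to pass to the quiver language of Section~\ref{ss:pervquiv} and combine it with the standard characterisation of the middle extension. Fix $\fa,\fb$ as in Section~\ref{ss:pervquiv} (condition~\eqref{eq:betaCondition} being vacuous here, since $\Sigma=\{\sigma\}$) and write the quiver of $K$ as $(\Psi,\Phi_\sigma,u,v)$, where $\Psi=\Psi(K)$ is the fibre of $L=i^*K[-1]$, $\Phi_\sigma=\Phi_\sigma(K)$, and $T_\sigma=1-vu$ is the monodromy of $L$ (up to conjugation).

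First I would record the quiver description of the subobjects and quotients of $K$ supported at $\sigma$. A perverse sheaf on $D$ supported at $\{\sigma\}$ is a skyscraper and has quiver $(0,W,0,0)$ for a vector space $W$. Unwinding the two commutation relations of a quiver morphism, a morphism $(0,W,0,0)\to(\Psi,\Phi_\sigma,u,v)$ is the same datum as a linear map $W\to\Phi_\sigma$ with image contained in $\ker v$, and it is a monomorphism exactly when that map is injective; dually, a morphism $(\Psi,\Phi_\sigma,u,v)\to(0,W,0,0)$ is the same datum as a linear map $\Phi_\sigma\to W$ vanishing on $\mathrm{im}(u)$, and it is an epimorphism exactly when that map is surjective. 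Hence $K$ admits a nonzero subobject supported at $\sigma$ iff $v$ is not injective, and a nonzero quotient supported at $\sigma$ iff $u$ is not surjective. Since $K\cong i_{!*}i^*K$ -- via the canonical morphisms $i_!i^*K\to K$ and $K\to i_*i^*K$ -- precisely when $K$ has no such subobject and no such quotient (the defining property of the middle extension; see e.g.\ \cite{HTT}), the lemma reduces to the elementary statement that a quiver $(\Psi,\Phi_\sigma,u,v)$ with $T_\sigma=1-vu$ has $u$ surjective and $v$ injective if and only if it is isomorphic to $(\Psi,\mathrm{im}(1-T_\sigma),1-T_\sigma,\iota_\sigma)$.

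To prove that remaining statement, suppose $u$ is surjective and $v$ injective. Then $v$ induces an isomorphism $\Phi_\sigma\isom\mathrm{im}(v)\subseteq\Psi$, and surjectivity of $u$ gives $\mathrm{im}(v)=v(\mathrm{im}(u))=\mathrm{im}(vu)=\mathrm{im}(1-T_\sigma)$. Transporting the quiver along this isomorphism, $u$ becomes $vu=1-T_\sigma$ and $v$ becomes the inclusion $\iota_\sigma$, which is the required isomorphism of quivers. Conversely, in $(\Psi,\mathrm{im}(1-T_\sigma),1-T_\sigma,\iota_\sigma)$ the first map is surjective onto its image and the second is injective, and $1-\iota_\sigma\circ(1-T_\sigma)=T_\sigma$ shows that this quiver lies in the correct category and has the two properties.

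I expect the only genuinely delicate point to be bookkeeping with conventions: fixing once and for all which of $u,v$ is the canonical and which the variation map, so that $T_\sigma=1-vu$ holds with the sign normalisation of Section~\ref{ss:pervquiv}, and invoking in the precise form needed the fact that a perverse extension of $L[1]$ over $\sigma$ coincides with the middle extension if and only if it has no nonzero subobject or quotient supported at $\sigma$. Granting these, each step above is a one-line diagram chase.
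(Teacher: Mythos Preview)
Your argument is correct. The paper does not actually give a proof of this lemma: it simply states it as ``adapted from \cite[Corollary 4.2.]{H20}'' and calls it ``the standard description of the quiver associated to a minimal extension sheaf.'' Your proof supplies exactly that standard argument---characterising sub- and quotient objects supported at $\sigma$ via the quiver, invoking the defining property of $i_{!*}$, and reducing to the elementary linear-algebra statement about $u$ surjective and $v$ injective---so there is nothing to compare against beyond noting that you have written out what the paper left implicit. Your caveat about conventions (which of $u,v$ is ``can'' versus ``var'', and the sign in $T_\sigma=1-vu$) is well placed; once those are fixed consistently with Section~\ref{ss:pervquiv}, the proof goes through as written.
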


\begin{cor} \label{cor:circlequiv}
The quiver of $F$ has $n$ nodes, one for each $z\in n\mu_n$. Let $P=T_0$ be the local monodromy of $\cH$ at $0$ considered as an automorphism of $\Psi(F)$. For $\zeta_n=\exp(\frac{2\pi i}{n})$ the nodes at $z$ and $\zeta_nz$ look as follows: 
\[\begin{tikzcd}
 \mathrm{im}(1-T_z)=\Phi_z(F) \arrow[shift right=1]{rr}[swap]{\iota_z} &  & \arrow[ shift right=1]{ll}[swap]{1-T_z} \Psi(F) \arrow[rr, shift left=1, "1-PT_zP^{-1}"] & & \arrow[ll, shift left=1, "P\iota_zP^{-1}"] \Phi_{\zeta_n z}(F)= P(\mathrm{im}(1-T_z))
\end{tikzcd}\]

\end{cor}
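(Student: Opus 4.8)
The aim is to read off the quiver of $F$ from the fact that $F$ is a minimal extension, together with the description of the local monodromies obtained in the previous subsection. Concretely, I would combine three inputs already at hand: the previous lemma ($F\cong k_{!*}k^*F$, where $k\colon\AA^1_z\setminus n\mu_n\hookrightarrow\AA^1_z$ is the open embedding), the ``standard description'' lemma above (the shape of a node of the quiver of a minimal extension), and the proposition on the monodromy of the pullback (the local monodromies of $F$ at the points of $n\mu_n$ are conjugates of one another).

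I would first unwind the structure of $F$. By the previous lemma the restriction $\mathcal{L}\vcentcolon= k^*F[-1]$ is a local system on $\AA^1_z\setminus n\mu_n$ with $F\cong k_{!*}(\mathcal{L}[1])$; since $\mathcal{L}$ is the extension of $\gamma^*\cV$ across the origin, its generic stalk is identified with $V=\cV_{x_0}$, which identifies the nearby space of the quiver with $\Psi(F)\cong V$ and makes sense of $P=T_0$ as an automorphism of $\Psi(F)$. Restricting $F$ to a small disc around a point $z\in n\mu_n$ containing no other singular point does not affect minimality, so the ``standard description'' lemma applies there and shows that the node of the quiver of $F$ at $z$ has the form
\[ \Psi(F)\ \xrightarrow{\,1-T_z\,}\ \Phi_z(F)=\mathrm{im}(1-T_z)\ \xrightarrow{\,\iota_z\,}\ \Psi(F), \]
with $T_z$ the local monodromy of $\mathcal{L}$ at $z$ and $\iota_z$ the inclusion. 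In particular the quiver has exactly $n$ nodes, one for each point of $n\mu_n$.

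Next I would compare the data at $z$ and at $\zeta_nz$. By the proposition on the monodromy of the pullback, in the normalisation where the local monodromy at $z$ is $T_z$, the local monodromy at $\zeta_nz$ is $T_{\zeta_nz}=PT_zP^{-1}$ (this is the relation $T_{n\zeta_n^k}=T_0^kT_1T_0^{-k}$ with $P=T_0$). Since conjugation by the invertible $P$ preserves images, $\Phi_{\zeta_nz}(F)=\mathrm{im}(1-PT_zP^{-1})=P(\mathrm{im}(1-T_z))=P(\Phi_z(F))$; the outgoing map of the $\zeta_nz$-node is $1-T_{\zeta_nz}=P(1-T_z)P^{-1}$; and the incoming map, being the inclusion of $P(\mathrm{im}(1-T_z))$ into $\Psi(F)$, is exactly $P\iota_zP^{-1}$. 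Assembling these gives the displayed diagram of the corollary.

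The step I expect to be the main obstacle is matching the monodromy conventions: the equivalence $Q^{(\fa,\fb)}$ fixes, via the exponential dominance (zig-zag) order attached to $\fb=\exp(2\pi i/(3n))$, specific paths from the base point to loops around the points of $n\mu_n$, whereas the proposition was proved using the generators $g_j'$ adapted to the $\mu_n$-cover $\gamma_n$, so one has to check these yield the same quiver up to isomorphism — which is all the corollary claims. The cleanest way is to observe that the deck transformation $z\mapsto\zeta_nz$ of $\gamma_n$ is a symmetry of $F$ which cyclically permutes the nodes $z\mapsto\zeta_nz$ and acts on $\Psi(F)\cong V$ by $P=T_0$ (standard covering-space theory, exactly as in the proof of the proposition); transporting the $z$-node along this symmetry produces the $\zeta_nz$-node with all nearby-side data conjugated by $P$, which is precisely the assertion.
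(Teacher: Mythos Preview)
Your proposal is correct and follows essentially the same approach as the paper: the paper's proof is the one-liner ``This follows immediately from the preceding lemma and the computation of the local monodromy of $\gamma^*\cH$ in Section~\ref{sub:localmon},'' and you have simply unpacked what this means. Your extra paragraph on matching monodromy conventions (paths for $Q^{(\fa,\fb)}$ versus the generators $g_j'$) is a legitimate technical point the paper glosses over, but since the corollary only asserts the quiver up to isomorphism, your resolution via the $\mu_n$-symmetry is adequate and does not constitute a different route.
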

\begin{proof} This follows immediately from the preceding lemma and the computation of the local monodromy of $\gamma^*\cH$ in Section \ref{sub:localmon}.
\end{proof}

\begin{theorem} \label{thm:stokesreg} 
Let $\prod_{j=1}^n (X-\zeta_{n+m}^{-j}) = X^n+\lambda_1X^{n-1}+\dots+\lambda_{n-1}X+\lambda_n$. Then the Stokes matrices for the generalised Airy equation $\rho_n^*\cM_{n,m}$ are given by the $(n \times n)$-matrices 
\[S_{2k} = 
\begin{pmatrix} 1 & {s_{12}} & {s_{13}} &  \dots & {s_{1n}} \\
     & 1 & {s_{23}} & \dots & {s_{2n}}  \\
     & & \ddots & \ddots & \vdots \\
     & & & \ddots & {s_{n-1,n}}\\
     & & & & 1 
\end{pmatrix},\quad S_{2k-1}^{-1} = 
\begin{pmatrix} -\lambda_n &  &  &  & \\
    {-s_{21}} & -\lambda_n &  & & \\
    \vdots & & \ddots &  &  \\ 
    & & & \ddots &\\
    {-s_{n,1}} & \dots & & &  -\lambda_n 
\end{pmatrix}\] 
for certain $s_{ij}\in \{\lambda_1,\dots, \lambda_{n-1}\}$, $k=1,\dots,n+m$ and with respect to the generic direction $\theta_0=\frac{2\pi}{3n(n+m)}$. Moreover, the unipotent Stokes matrices are regular in the sense that their centraliser dimension is minimal.
\end{theorem}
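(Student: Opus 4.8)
The plan is to read off the explicit form of the matrices from Theorem~\ref{theoremQuiverStokes} applied to the quiver of $F$ described in Corollary~\ref{cor:circlequiv}, and then to deduce regularity from a non-vanishing statement for the coefficients $\lambda_g$.

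First, I would use the isomorphism of the corollary following Proposition~\ref{prop:hyptoairy} to replace $\rho_n^*\cM_{n,m}$ near $\infty$ by the pullback along the degree-$(n+m)$ cover $g\colon v\mapsto \frac{v^{n+m}}{n+m}$ of the enhanced Fourier--Sato transform $(eF)^\curlywedge$. The connection underlying $(eF)^\curlywedge$ is of pure level $1$, with exponential factors $e^{sw}$ ($s\in n\mu_n$), so Theorem~\ref{theoremQuiverStokes} exhibits its two Stokes multipliers $S_{\pm\fb}$ (with $\fb=\exp(\frac{2\pi i}{3n})$) as block-triangular matrices built from the quiver of $F$. Pulling back along $g$ produces a connection of pure level $n+m$, whose $2(n+m)$ sector boundaries are the $g$-preimages of $h_\fb$ and $h_{-\fb}$ --- equally spaced generic rays, alternating in type --- relative to the generic direction $\theta_0=\frac{\arg\fb}{n+m}=\frac{2\pi}{3n(n+m)}$; near each of these rays the comparison of the two local decompositions is the $g$-pullback of the corresponding one at $h_{\pm\fb}$. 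Hence, up to a fixed relabelling of the exponential factors and the translation between the counter-clockwise convention of \S\ref{sec:StokesPhenomena} and the $S_{\pm\fb}$-convention of Theorem~\ref{theoremQuiverStokes}, the Stokes matrices of $\rho_n^*\cM_{n,m}$ at $\infty$ are $n+m$ copies of $S_\fb$ (these being the $S_{2k}$) alternating with $n+m$ copies of $S_{-\fb}^{-1}$ (these being the $S_{2k-1}$). This pins down the shape of the matrices.

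Then I would make the quiver of $F$ explicit. By Levelt's Theorem~\ref{thm:levelt}, in the normalisation of \S\ref{sub:localmon} one has $T_0=P$, the companion matrix of $X^n-1$ (a cyclic shift of $\CC^n$), while one may take $T_\infty^{-1}=B$, the companion matrix of $\prod_{j=1}^{n}(X-\zeta_{n+m}^{-j})=X^n+\lambda_1X^{n-1}+\dots+\lambda_n$, so that $T_1=T_0^{-1}T_\infty^{-1}=P^{-1}B$. Since $B$ and $P$ are companion matrices, $B-P$ is supported on its last column, so $T_1-1=P^{-1}(B-P)$ has rank one: $T_1=1+\xi\eta^{\top}$ with $\eta=e_n$ and $\xi\in\CC^n$ the explicit vector obtained by cyclically rotating $(-\lambda_n-1,-\lambda_{n-1},\dots,-\lambda_1)$. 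A one-line computation then gives $\eta^{\top}P^{d}\xi=-\lambda_d$ for $1\le d\le n-1$ and $\eta^{\top}\xi=-\lambda_n-1$. Plugging $T_{z_k}=P^kT_1P^{-k}$, $u_k=1-T_{z_k}$, $v_k=$ the inclusion of $\mathrm{im}(1-T_{z_k})=\CC P^{k}\xi$ (Corollary~\ref{cor:circlequiv}) into Theorem~\ref{theoremQuiverStokes}, and taking $P^{k}\xi$ as the basis of each $\Phi_{z_k}$, one obtains $\TT_k=1-u_kv_k=-\lambda_n$ and the off-diagonal entry attached to the ordered pair $(z_{\sigma(i)},z_{\sigma(j)})$ equal to $-\eta^{\top}P^{\sigma(j)-\sigma(i)}\xi=\lambda_{(\sigma(j)-\sigma(i))\bmod n}\in\{\lambda_1,\dots,\lambda_{n-1}\}$, where $\sigma$ records the exponential-dominance (zig-zag) order on $n\mu_n$. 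This is exactly the asserted form; for $m=1$ one has $\prod_{j=1}^n(X-\zeta_{n+1}^{-j})=\frac{X^{n+1}-1}{X-1}$, hence $\lambda_1=\dots=\lambda_n=1$ and the matrices become completely explicit.

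Finally, the regularity of the unipotent matrices $S_{2k}$. An upper unitriangular $n\times n$ matrix has centraliser of the minimal dimension $n$ --- equivalently, is a single unipotent Jordan block --- exactly when all its superdiagonal entries are nonzero; here those entries are $\lambda_{d_1},\dots,\lambda_{d_{n-1}}$ with $d_i=(\sigma(i+1)-\sigma(i))\bmod n\in\{1,\dots,n-1\}$, so it suffices to show $\lambda_g\ne 0$ for every $1\le g\le n-1$. Since $\{\zeta_{n+m}^{-j}:1\le j\le n\}=\zeta_{n+m}^{m}\cdot\{1,\zeta_{n+m},\dots,\zeta_{n+m}^{n-1}\}$ and elementary symmetric functions are homogeneous, this reduces to the non-vanishing of the $g$-th elementary symmetric function of $1,\zeta_{n+m},\dots,\zeta_{n+m}^{n-1}$; by the $q$-binomial theorem it equals $\zeta_{n+m}^{\binom{g}{2}}\binom{n}{g}_{\zeta_{n+m}}$, and the Gaussian binomial $\binom{n}{g}_{\zeta_{n+m}}=\prod_{i=n-g+1}^{n}(1-\zeta_{n+m}^{i})/\prod_{i=1}^{g}(1-\zeta_{n+m}^{i})$ is a quotient of nonzero terms, because every index $i$ occurring satisfies $1\le i\le n<n+m$. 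Hence $\lambda_g\ne 0$, so $(S_{2k}-1)^{n-1}\ne 0$ and $S_{2k}$ is regular unipotent; the same computation shows that $S_{2k-1}^{-1}=S_{-\fb}$ is regular unipotent after rescaling by $-\lambda_n$, matching the formulation that the Stokes matrices are regular unipotent up to the formal monodromy. The step I expect to be the main obstacle is the first one: making the descent through the ramified cover $g$ fully rigorous --- which preimage ray carries $S_\fb$ and which carries $S_{-\fb}^{-1}$, the induced relabelling of exponential factors, and the reconciliation of the two orientation conventions. Once that dictionary is set up, the remaining computations --- the rank-one description of $T_1$, the identity $\eta^{\top}P^{d}\xi=-\lambda_d$, and the Gaussian-binomial non-vanishing --- are short and self-contained.
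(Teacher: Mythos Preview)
Your proof is correct and follows the same architecture as the paper: compute the quiver of $F$ from Corollary~\ref{cor:circlequiv}, plug into Theorem~\ref{theoremQuiverStokes} to get $S_{\pm\fb}$, pull back along $g$ to obtain the $2(n+m)$ Stokes matrices, and finish by showing that all off-diagonal entries (in particular the superdiagonal ones) are nonzero. Your rank-one bookkeeping $T_1=1+\xi\eta^\top$ and the identity $\eta^\top P^{d}\xi=-\lambda_d$ are just a compact rewriting of the paper's computation that $1-T_1$ is supported on its last column $(\lambda_{n-1},\dots,\lambda_1,1+\lambda_n)^\top$; the resulting description $s_{ij}=\lambda_{(\sigma(j)-\sigma(i))\bmod n}$ is a mild sharpening of what the paper states.

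The one genuine divergence is the non-vanishing of the $\lambda_g$. The paper proves this via a Vandermonde argument (Lemma~\ref{lem:coeffs}): if some coefficient of $\prod_{j=1}^{n}(X-z^j)$ vanished, deleting the corresponding column of the Vandermonde-type matrix built from $z,z^2,\dots,z^n$ would leave an invertible square matrix annihilating a nonzero vector. Your route instead identifies $\lambda_g$ (up to a nonzero scalar) with the Gaussian binomial $\binom{n}{g}_{q}$ at $q=\zeta_{n+m}$ and observes that every factor $1-q^{i}$ in the product formula has $1\le i\le n<n+m$, hence is nonzero. Both arguments are short; yours is perhaps the more transparent of the two, since it exhibits $\lambda_g$ explicitly rather than arguing by contradiction, and it makes the role of the bound $n<n+m$ (equivalently $m\ge 1$) visible. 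The Vandermonde lemma, on the other hand, is stated for an arbitrary $z$ with distinct small powers and so is reusable in slightly greater generality.

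Your closing worry about rigorously matching sectors and conventions under the cover $g$ is exactly the point the paper also passes over quickly; there is no hidden difficulty beyond what is already sketched in \S\ref{sec:StokesPhenomena}.
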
 
\begin{proof}
To compute the Stokes matrices, we need to describe the quiver of $F$ as explicitly as possible. The monodromy of $\cH$ at $1$ is the pseudo-reflection \[T_1=T_0^{-1}T_\infty^{-1}\]
with non-trivial eigenvalue $-\lambda_n$. 
We choose a basis such that $T_0$ and $T_\infty^{-1}$ are given as companion matrices. Then $T_0^{-1}$ is the permutation matrix given by the $n$-cycle $(1\, 2\, \dots \, n)$ and $T_\infty^{-1}$ is the matrix

\[\begin{pmatrix} 
   &           &       & -\lambda_n \\
1   &          &       & \vdots  \\
    & \ddots    &       & -\lambda_2 \\
    &           & 1     & -\lambda_1  \\
    \end{pmatrix}.\]
Furthermore, $1-T_1$ is a matrix in which all entries apart from the last column vanish and the last column is the transpose of 
\[(\lambda_{n-1}, \dots, \lambda_1, 1+\lambda_n). \]
Let $\ell$ and $k$ be integers between $1$ and $n$. By Corollary \ref{cor:circlequiv} the quiver of $F$ looks as follows 

\[\begin{tikzcd}
& V_2 \arrow[dr, shift left=1.5]  &   &  V_1 \ar{dl} \\
V_3\arrow[rr, shift left=1.5] & & \CC^n \arrow[ur, shift left=1.5] \ar{ul} \ar{ll} \ar{dl} \arrow[rr, shift left=1.5] \ar{dr} & &\ar{ll} V_0 \\
 & V_4 \arrow[ur, shift left=1.5] & \dots  & \arrow[ul, shift left=1.5]V_{n-1} & \\
\end{tikzcd}
\]
with $V_j=P^j\mathrm{im}(1-T_{1})$. Thus the part relating the singularities $n\zeta^k$ and $n\zeta^\ell$ is given by

\[\begin{tikzcd}
 P^k\mathrm{im}(1-T_{1}) \arrow[shift right=1]{rr}[swap]{} &  & \arrow[ shift right=1]{ll}[swap]{1-P^{k}T_{1}P^{-k}} \CC^n \arrow[rr, shift left=1, "1-P^{\ell}T_{1}P^{-\ell}"] & & \arrow[ll, shift left=1]  P^\ell\mathrm{im}(1-T_{1}),
\end{tikzcd}\]
with the lower horizontal arrows being the natural inclusions. Here $P=T_0$ is a cyclic permutation matrix corresponding to the $n$-cycle $(n \, \dots \, 1)$. The image of $1-T_{1}$ is one-dimensional and generated by the non-trivial column above. Only the $(n-\ell)$-th column of the matrix $1-P^{\ell}T_{1}P^{-\ell}$ is non-trivial. Therefore the linear map 
\[P^k\mathrm{im}(1-T_{1}) \hookrightarrow \CC^n  \xrightarrow{1-P^{\ell}T_{1}P^{-\ell}} P^\ell\mathrm{im}(1-T_{1}) \]
is given by multiplication with the $(n-\ell)$-th entry of $P^k(\lambda_{n-1}, \dots, \lambda_1, 1+\lambda_n)^T$. For $k\neq \ell$ this entry is not $1+\lambda_n$. By Theorem \ref{theoremQuiverStokes} the upper-triangular Stokes matrix of $(eF)^\curlywedge$ for $F=\gamma_n^*\cS ol_{\AA^1_x}\big(\cD_{\AA^1_x}/\cD_{\AA^1_x}\mathrm{Hyp}(\alpha,\beta)\big)[1]$ is of the form 
\[S_\fb = 
\begin{pmatrix} 1 & {s_{12}} & {s_{13}} &  \dots & {s_{1n}} \\
     & 1 & {s_{23}} & \dots & {s_{2n}}  \\
     & & \ddots &  & \vdots \\
     & & & & 1 
\end{pmatrix}\] 
with $s_{ij}\in \{\lambda_1,\dots, \lambda_{n-1}\}$ and similarly for the lower triangular matrix $S_{-\fb}$ (the diagonal entries of the latter are then precisely $1-(1+\lambda_n)=-\lambda_n$). These two matrices relate the decompositions
$$\delta_{\pm\fa}\colon \pi^{-1}\CC_{H_{\pm\fa}}\otimes \cS ol^\mathrm{E}_{(\AA^1_w)_\infty}\big((eF)^\curlywedge\big)\overset{\sim}{\longrightarrow}\pi^{-1}\CC_{H_{\pm\fa}}\otimes \bigoplus_{\zeta\in\mu_n}\EE^{n\zeta w}$$
on the half-lines $h_{\fb}$ and $h_{-\fb}$. With the conventions as in §\ref{sec:StokesPhenomena} (in particular, using a counter-clockwise arrangement), Stokes matrices with respect to the generic direction $\frac{2\pi}{3n}$ are given by $S_{-\fb}^{-1}$ and $S_\fb$.

Pulling back along the map $g(v)=\frac{v^{n+m}}{n+m}$, we obtain the sectors $H_1,\ldots,H_{2(n+m)}$, alternately identified with $H_{-\fa}$ and $H_{\fa}$ via $g$, and decompositions
$$\pi^{-1}\CC_{S_j}\otimes \mathrm{E}\rho_n^{-1} \cS ol^\mathrm{E}_{(\AA^1_y)_\infty}(\cM_{n,m})\cong \pi^{-1}\CC_{S_j}\otimes \bigoplus_{\zeta\in\mu_n}\EE^{\frac{n}{n+m}\zeta v^{n+m}}$$
induced by $\delta_{-\fa}$ (resp.\ $\delta_\fa$) for $j$ odd (resp.\ even).
Consequently, the $2(n+m)$ Stokes matrices with respect to the generic direction $\theta_0=\frac{2\pi}{3n(n+m)}$ are given by $S_{2k}=S_\fb$ and $S_{2k-1}=S_{-\fb}^{-1}$ for $k=1,\dots,n+m$. 

To prove the statement about regularity note that if $\lambda_i \neq 0$ for $i=1,\dots,n-1$, we find that $\ker(1-S_\fb)$ is one-dimensional and thus $S_\fb$ is conjugate to a single Jordan block of maximal length. Therefore, the theorem will follow from Lemma \ref{lem:coeffs} below. 
\end{proof}
\begin{lemma}\label{lem:coeffs}
Let $r$ be a positive integer, $z\in \CC$ such that $z^i\neq z^j$, $z^i\neq 1$ for $i\neq j$, $1\le i,j \le r$ and let 
$g=\prod_{j=1}^r (X-z^j) \in \CC[X]$. 
Then no coefficient of $g$ vanishes. 
\end{lemma}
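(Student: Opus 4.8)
The plan is to extract a closed multiplicative recursion for the coefficients of $g$ from a functional equation it satisfies. Write $g(X)=\sum_{k=0}^r a_k X^k$ with $a_r=1$, so that (up to sign) $a_k$ is the $(r-k)$-th elementary symmetric function of the roots $z,z^2,\dots,z^r$, and the assertion is that every $a_k\neq 0$. Expanding these symmetric functions directly is unpromising, since over $\CC$ the many monomials involved can cancel; instead one wants a handle that is multiplicative. I would first note $z\neq 0$ (forced by $z\neq z^2$ when $r\geq 2$; the case $r=1$ is trivial once $z\neq0$), and then observe that, because the roots of $g$ are exactly $z,z^2,\dots,z^r$,
\[ g(zX)=\prod_{j=1}^r(zX-z^j)=z^r\prod_{j=1}^r(X-z^{j-1})=z^r(X-1)\prod_{i=1}^{r-1}(X-z^i)=z^r(X-1)\,\frac{g(X)}{X-z^r}, \]
so that $g$ satisfies the polynomial identity $(X-z^r)\,g(zX)=z^r(X-1)\,g(X)$.

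Next I would compare the coefficient of $X^k$ on the two sides of this identity. Using $g(zX)=\sum_k a_k z^k X^k$ and the convention $a_{-1}=a_{r+1}=0$, the left-hand side contributes $a_{k-1}z^{k-1}-a_k z^{k+r}$ and the right-hand side $z^r a_{k-1}-z^r a_k$; equating and rearranging gives, for $1\le k\le r$,
\[ a_{k-1}\,z^{k-1}\bigl(1-z^{\,r-k+1}\bigr)=-\,a_k\,z^{r}\bigl(1-z^{k}\bigr). \]
For $1\le k\le r$ the exponents $k$ and $r-k+1$ both lie in $\{1,\dots,r\}$, so by hypothesis $1-z^{k}\neq 0$, and together with $z\neq 0$ this makes the right-hand side nonzero whenever $a_k\neq 0$; hence $a_k\neq 0$ implies $a_{k-1}\neq 0$. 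Since $a_r=1\neq 0$, a downward induction on $k$ yields $a_k\neq 0$ for all $0\le k\le r$, which is exactly the claim.

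There is no real obstacle once the functional equation is written down; the only care needed is the bookkeeping that every exponent $a$ for which one invokes $z^a\neq 1$ indeed satisfies $1\le a\le r$. (Equivalently, one can recognise $a_{r-k}$, up to a monomial factor $z^{k(k+1)/2}$, as the Gaussian binomial coefficient $\binom{r}{k}_z$ via the finite $q$-binomial theorem $\prod_{j=1}^r(1+q^jt)=\sum_{k}q^{k(k+1)/2}\binom{r}{k}_q t^k$, and then use the product formula $\binom{r}{k}_q=\prod_{i=1}^k(1-q^{\,r-k+i})/(1-q^i)$, all of whose factors are of the form $1-z^a$ with $1\le a\le r$; the functional-equation argument above is simply a self-contained derivation of this.)
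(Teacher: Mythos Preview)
Your proof is correct and takes a genuinely different route from the paper. The paper argues via linear algebra: the vector $v=(a_0,\dots,a_r)^T$ of coefficients lies in the kernel of the $r\times(r+1)$ matrix $M=(z^{ij})_{1\le i\le r,\,0\le j\le r}$ (since $g(z^i)=0$), and every $r\times r$ minor of $M$ is, up to column scalings, a Vandermonde matrix in the pairwise distinct numbers $1,z,\dots,z^r$, hence invertible; so if some $a_\ell=0$, deleting the $\ell$-th entry and column forces $v=0$, a contradiction. Your argument instead exploits that the roots form a geometric progression to obtain the functional equation $(X-z^r)g(zX)=z^r(X-1)g(X)$, which yields a two-term multiplicative recursion $a_{k-1}z^{k-1}(1-z^{r-k+1})=-a_k z^r(1-z^k)$ and hence nonvanishing by downward induction from $a_r=1$. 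Your approach is more constructive (it gives the explicit ratio $a_{k-1}/a_k$ and the link to Gaussian binomials), while the paper's Vandermonde trick is a clean existence argument that avoids any recursion; both rely on exactly the same hypothesis that $1,z,\dots,z^r$ are pairwise distinct. One minor remark: for $r=1$ the hypotheses as literally stated do not exclude $z=0$, and both your argument and the paper's tacitly need $z\neq 0$ there; this edge case is irrelevant to the application in the paper.
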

\begin{proof} Write $g=\sum_{i=0}^r a_iX^i$ and consider the matrix
\[M=\begin{pmatrix} 1 & z & z^2 & \dots & z^r \\
1 & z^2 & z^4 & \dots & z^{2r} \\
\vdots & & & & \vdots \\
1 & z^r & z^{2r} & \dots & z^{r^2} 
\end{pmatrix}.\] 
Let $v=(a_0,\dots,a_r)^T$ be the coefficient vector of $g$ and note that $Mv=0$. Removing any column of $M$ yields a square invertible matrix. Indeed up to scaling columns by powers of $z$ the resulting square matrix is a Vandermonde matrix with determinant $\prod_{j=1}^r (z^{k_j}-z^{l_j})$ where $k_j, l_j\in \{0,\dots,r\}$ and $k_j\neq l_j$ for all $j$. Since powers of $z$ are pairwise distinct, this determinant does not vanish. Assume there is an index $\ell$ such that $a_\ell=0$. Then denoting by $\widehat{M}$ the matrix we obtain from $M$ by removing the $(\ell+1)$-st column and by $v_\ell$ the vector obtained from $v$ by removing the entry $a_\ell$ we get that $\widehat{M}v_\ell=0$. This implies that every coefficient of $g$ vanishes, contradicting the fact that $g\neq 0$. 
\end{proof}

\begin{remark}
The proof of Theorem \ref{thm:stokesreg} gives a recipe to determine the Stokes matrices of a generalised Airy equation. Given any pair of coprime integers $(n,m)$ as an input, one can algorithmically compute the entries of the Stokes matrices of $\partial^n+y^m$ in terms of $(n+m)$-th roots of unity. Airy equations of type $(n,m)$ and $(m,n)$ are interchanged by Fourier--Laplace transform. The recipe makes it possible to compare Stokes matrices for both types. A thorough treatment of the behaviour of Stokes data under Fourier--Laplace transform can be found in \cite{Moc20}.
\end{remark}
\subsection{The rigid case $m=1$} Let us focus on the rigid operator
\[\partial_y^n-y.\]
This situation is a special case of Proposition 5.9 in \cite{BH89} and is recovered as follows. When $m=1$ the eigenvalues of $T_0$ and $T_\infty$ are 
\begin{align*}
    &\{1,\zeta_n,\zeta_n^2,\dots, \zeta_n^{n-1} \} \\
    &\{\zeta_{n+1}, \zeta_{n+1}^2, \dots, \zeta_{n+1}^n \}
\end{align*}
respectively. Then $T_1$ has order $2$ and is a real reflection. Let 
\[W=\{(x_1,\dots,x_{n+1}) \mid \sum x_i=0\}\] be the standard reflection representation of the symmetric group $S_{n+1}$. Define $\sigma_0\in \GL(W)$ to be the cyclic permutation $(1 \, \dots \,  n)$ and $\sigma_\infty$ to be the $(n+1)$-cycle $(n+1 \, \dots \, 1)$. Then $\sigma_1=\sigma_0^{-1}\sigma_\infty^{-1}$ is given by $(1, n+1)$. Hence $\sigma_0$ and $\sigma_\infty$ have the same eigenvalues as $T_0$ and $T_\infty$ and $\sigma_1$ is a reflection. By Levelt's Theorem \ref{thm:levelt} the monodromy representation $\rho$ of $\cH$ is isomorphic to the representation $$\pi_1(\AA^1_x\setminus \{0,1\},x_0)\rightarrow \GL(W)$$ defined by $\rho(\gamma_i)=\sigma_i$. We conclude that the local monodromy of $\gamma^*\cH$ at $n\mu_n$ is given by the collection $$\{(k, n+1), \, k=1,\dots,n\}.$$ 
In this case we can give a closed formula for the Stokes multipliers. In the notation of Theorem \ref{thm:stokesreg}, we have
\[\prod_{j=1}^n (X-\zeta_{n+1}^{-j})=X^n+X^{n+1}+\dots+X+1. \]
Therefore $s_{ij}=1$ for all $1\le i , j \le n$ and the Stokes multipliers of $\rho_n^*\cM_{n,1}$ are the $(n\times n)$-matrices given by 
\[S_{2k} =  \begin{pmatrix} 1 & 1 &   \dots & 1 \\
     & 1 &   \dots & 1  \\
     & & \ddots &   \vdots \\
     & &  & 1 
    \end{pmatrix} \\
     ,\quad S_{2k-1}^{-1} = \begin{pmatrix} -1 &   &  &  \\
    -1 & -1 &  &   \\
    \vdots  & \ddots & \ddots &   \\
    -1 &  \dots  &-1&  -1 
    \end{pmatrix} \]
for $k=1,\dots,n+1$, with respect to the generic direction $\theta_0=\frac{2\pi i}{3n(n+1)}$. 

\begin{remark}
Let us remark that Stokes matrices of the operator $\partial_y^n-y$ can also be computed analogously to the case of the classical Airy operator in \cite[§7.1]{DHMS20}, where the computation relies on the fact that $\cM_{n,1}=\cD_{\AA^1}/\cD_{\AA^1}(\partial_y^n-y)=\frF\big(\cE^{\frac{(-1)^n}{n+1}x^{n+1}}\big)$. This enables the authors in loc.\ cit.\ to reduce the computation to the study of the topology of a certain branched covering. However, for $m\neq 1$, we do not have such a nice realisation as a Fourier--Laplace transform of a single exponential, and these cases are still covered by our approach.
\end{remark}

\section{Differential Galois groups}
\subsection{An argument of Frenkel and Gross} Using the knowledge of Stokes matrices we recover results on the differential Galois group of the $\cD_{\AA^1_y}$-module $\cM_{n,m}$ due to Katz \cite[§4.2]{Ka87} and also Kamgarpour--Sage \cite{KS20}. Denote the differential Galois group by $H=G_{\diff}(\cM_{n,m})$.

\begin{prop} The differential Galois group of $\cM_{n,m}$ is given by
\[G_{\diff}(\cM_{n,m}) = \begin{cases} \SL(n), n \,\, \textup{odd} \\ \Sp(n), n \,\, \textup{even}. \end{cases} \]
\end{prop}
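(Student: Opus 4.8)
The plan is to combine Ramis's density theorem with the regular unipotency of the Stokes matrices established in Theorem~\ref{thm:stokesreg}, following the strategy of Frenkel and Gross \cite{FG09}. Write $H = G_{\diff}(\cM_{n,m}) \subseteq \GL(V)$, where $V\cong\CC^n$ is the solution space, and assume $n\ge 2$ (for $n=1$ the group is $\Gm$). First I would record two global constraints on $H$. Since $\partial_y^n-y^m$ has no term in $\partial_y^{n-1}$, the connection matrix of $\cM_{n,m}$ in the cyclic basis $e,\partial_y e,\dots,\partial_y^{n-1}e$ has vanishing trace, so $\det\cM_{n,m}$ is the trivial connection and $H\subseteq\SL(V)$. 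Moreover $\cM_{n,m}$ is irreducible: at $\infty$ it has a single slope $(n+m)/n$ whose denominator $n$ (using $\gcd(n,m)=1$) equals its rank, so its formal completion there is $(\rho_n)_*$ of a rank-one connection whose exponential factor $\tfrac{n}{n+m}\zeta v^{n+m}$ is not invariant under the deck group — hence it is irreducible, and therefore $H$ acts irreducibly on $V$ and is reductive.

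Next I would verify that $H$ is connected. The finite group $H/H^\circ$ is the differential Galois group of an object $\cM'$ of the Tannakian category $\langle\cM_{n,m}\rangle^{\otimes}$, i.e.\ of a subquotient of a direct sum of tensor powers of $\cM_{n,m}$ and $\cM_{n,m}^\vee$. All of these are connections on $\AA^1$ that are nonsingular on the whole of $\AA^1$ (as $\cM_{n,m}$ is), and this property passes to subquotients on the smooth curve $\AA^1$; so $\cM'$ is a nonsingular connection on the simply connected variety $\AA^1$, hence trivial, and $H/H^\circ=1$.

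Now I would bring in the local data at $\infty$. By Ramis's theorem \cite[Theorem~8.10]{vdP03}, $H$ is generated as an algebraic group by the formal monodromy at $\infty$, the exponential torus, and the Stokes matrices; by Theorem~\ref{thm:stokesreg} the unipotent Stokes matrices are regular unipotent, i.e.\ conjugate to a single Jordan block, so $H$ contains a regular unipotent element $u$ of $\SL(V)$. I would then invoke the classification of connected reductive subgroups of $\SL(V)$ that act irreducibly on $V$ and contain a regular unipotent element of $\SL(V)$ (see \cite{FG09} and the references therein): up to conjugacy such a group is $\SL(V)$, or — in the appropriate representation — $\Sp(V)$ (for $n$ even), $\SO(V)$ (for $n$ odd), $G_2$ (for $n=7$), or the principal $\SL_2$.

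Finally I would pin down $H$ among these by a duality analysis, using the exponential factors $\{\tfrac{n}{n+m}\zeta\,y^{(n+m)/n}:\zeta\in\mu_n\}$ of $\cM_{n,m}$ at $\infty$ found in §\ref{subsec:AiryHyp}. The exponential factors of $\cM_{n,m}^\vee$ are the negatives of these, and $-\mu_n\cap\mu_n=\varnothing$ precisely when $n$ is odd; so for $n$ odd $\cM_{n,m}$ and $\cM_{n,m}^\vee$ share no exponential factor, hence are non-isomorphic, $H$ preserves no nondegenerate bilinear form on $V$, and $\SO(V)$, $G_2$ and the principal $\SL_2$ — all of which preserve a symmetric form in odd dimension — are excluded, leaving $H=\SL(V)$. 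For $n$ even the formal adjoint of $\partial_y^n-y^m$ is again $\partial_y^n-y^m$, so $\cM_{n,m}$ is self-dual and $H$ preserves a nondegenerate form; it cannot be symmetric, because $H$ contains the single even-size Jordan block $u$, which lies in no orthogonal group (even Jordan blocks occur in $\Og_n$ with even multiplicity), so the form is alternating and $H\subseteq\Sp(V)$; and the principal $\SL_2\subseteq\Sp(V)$ is excluded because its image would be $\Sym^{n-1}$ of a rank-$2$ connection, whose exponential factors at $\infty$ lie on a single real line through the origin, whereas $\mu_n$ does not for $n\ge 3$ (for $n=2$ one has $\Sp_2=\SL_2$). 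This gives $H=\Sp(V)$ for $n$ even. I expect the main obstacle to be invoking the subgroup classification and carrying out these eliminations: it is precisely the regularity of the Stokes matrices from Theorem~\ref{thm:stokesreg} that keeps the list of possibilities short enough for the explicit exponential factors at $\infty$ to rule out all but the asserted group.
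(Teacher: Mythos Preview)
Your argument follows the Frenkel--Gross strategy, as does the paper, but there is one step that needs more care. Theorem~\ref{thm:stokesreg} computes the Stokes matrices of the \emph{pullback} $\rho_n^*\cM_{n,m}$, not of $\cM_{n,m}$ itself; when you invoke Ramis's theorem for $H=G_{\diff}(\cM_{n,m})$ and then cite Theorem~\ref{thm:stokesreg}, you are silently identifying the two. The paper closes this gap by quoting \cite[Proposition~4.3]{Ka82}: the differential Galois group $H'$ of $\rho_n^*\cM_{n,m}$ is a subgroup of $H$, and the Stokes matrices of the pullback lie in $H'$ by Ramis applied there, hence in $H$. You should insert this inclusion (or an equivalent argument that the Stokes automorphisms of $\cM_{n,m}$ and of its ramified pullback coincide as elements of $\GL(V)$).

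Where your proof genuinely diverges from the paper is in the elimination step. The paper notes that the exponential torus centralises a maximal torus $T$ and that the formal monodromy maps to a Coxeter element of order $n$ in $N(T)/T$; this immediately kills every candidate group of Coxeter number $<n$ (so $\SO(2r{+}1)$, $G_2$, and the principal $\SL_2$), leaving only $\SL(n)$ for $n$ odd and $\{\SL(n),\Sp(n)\}$ for $n$ even, with self-duality deciding the latter. You instead use duality throughout: for odd $n$ the mismatch $-\mu_n\cap\mu_n=\varnothing$ shows $\cM_{n,m}\not\cong\cM_{n,m}^\vee$ and rules out every self-dual candidate at once; for even $n$ you use the parity constraint on Jordan blocks in $\Og_n$ to force the form to be alternating, and an exponential-factor argument to exclude the principal $\SL_2$. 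Both routes are valid. The Coxeter argument is cleaner and uniform across parities; your approach is more hands-on, but your exclusion of the principal $\SL_2$ (via the claim that the factors of $\Sym^{n-1}\cN$ are collinear) implicitly uses the Tannakian existence of a rank-two $\cN\in\langle\cM_{n,m}\rangle^\otimes$ and the fact that $\rho_n^*\cN$ already has unramified Levelt--Turrittin with factors $\pm\varphi$ --- this is correct, but deserves a line of justification.
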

\begin{proof} This proof is basically borrowed from \cite[§13]{FG09}. Since $\cM_{n,m}$ is irreducible, the differential Galois group $H$ is connected reductive. It is easy to see that $\cM_{n,m}$ has trivial determinant, hence $H \subset \SL(n)$. \par
By \cite[Proposition 4.3]{Ka82} the differential Galois group $H'$ of $\rho_n^*\cM_{n,m}$ is contained in $H$. The Stokes matrices of $\rho_n^*\cM_{n,m}$ can be considered as elements of $H'$, cf.\ \cite[§8.3]{vdP03}. Since one of the Stokes matrices of $\rho_n^*\cM_{n,m}$ is regular unipotent, $H'$ and in turn $H$ contains a principal $\SL(2)$.  This puts a severe restriction on the possibilities for $H$, cf.\ \cite[§13]{FG09}. We can only have $H$ being any group appearing in the following chains
\begin{align*}
    \Sp(2r) &\subset \SL(2r), \\
    \SO(2r+1) &\subset \SL(2r+1),  \\
    G_2 \subset \SO(7) &\subset \SL(7).
\end{align*}
The differential Galois group $H$ contains the formal monodromy $w$ and the exponential torus $S$. By Proposition \ref{prop:hyptoairy}  the exponential factors of $\rho_n^*\cM_{n,m}$ are $\zeta_n^jv^{n+m}$ (up to a scalar) for $j\in\{0,\ldots,n-1\}$. This implies that the centraliser $T$ of $S$ is a maximal torus.
Since $w$ normalises $S$ it normalises $T$. Again by Proposition \ref{prop:hyptoairy} the formal monodromy $w$ cyclically permutes the exponential factors (in a suitable ordering), hence its image in $N(T)/T=W$ is a Coxeter element of order $n$. Therefore we can exclude all groups above which have smaller Coxeter number. For odd $n$ this proves that $H=\SL(n)$. \par
Finally it is easy to see that for even $n$ the operator $P_{n,m}=\partial_y^n - y^m$ is self-dual. This implies that $G$ leaves a non-degenerate bilinear form invariant. By what we said above, this form has to be alternating and $H=\Sp(n)$. 
\end{proof}

\begin{remark} Let $G$ be a simple complex algebraic group with Lie algebra $\frg$. Fix a maximal torus $T$ and a Borel subgroup $B$ of $G$. For any negative simple root $\alpha$ let $x_\alpha$ be a generator of the root space $\frg_\alpha$, $N=\sum_{\alpha} x_{\alpha}$ and $E$ a generator of the highest root space. Then in \cite{KS20} a generalisation of the Airy equation is defined as the connection
\[\nabla = d+(N+tE)dt\]
on the trivial vector bundle on $\AA^1$. Based on our computations for $\GL(n)$ one may expect that the Stokes matrices of $\nabla$ are regular unipotent. The differential Galois groups could then be determined in a similar way. Unfortunately our method does not generalise beyond type $A$ as it relies heavily on Fourier transform. 
\end{remark}

\newpage
\bibliographystyle{alpha}
\bibliography{stokesbib}

\end{document}